\documentclass[11pt]{amsart}

\usepackage[margin=1in]{geometry}

\usepackage{amsmath}
\usepackage{mathtools}
\usepackage{amssymb}
\usepackage{amsfonts}
\usepackage{tikz}
\usepackage{quiver}
\usepackage[framemethod=tikz]{mdframed}
\mdfsetup{linecolor=black!100, roundcorner=3pt}

\usepackage[shortlabels]{enumitem}

\usepackage[english]{babel}
\usepackage{amsthm}

\usepackage{csquotes}

\usepackage[pdftex,colorlinks,citecolor=blue]{hyperref}

\usepackage{cleveref}

\usepackage[backend=biber, style=alphabetic, sorting=anyt, backref=page, maxnames=5, url=false]{biblatex}

\newcommand{\newword}[1]{\textit{#1}}

\theoremstyle{plain}
\newtheorem{theorem}{Theorem}[section]
\newtheorem{lemma}[theorem]{Lemma}
\newtheorem*{utheorem}{Theorem}
\newtheorem{proposition}[theorem]{Proposition}
\newtheorem{corollary}[theorem]{Corollary}

\theoremstyle{definition}

\newtheorem{question}[theorem]{Question}

\newtheorem{remark}[theorem]{Remark}
\newtheorem*{ack}{Acknowledgements}
\newtheorem*{comparison}{Comparison to Other Works}

\everymath{\displaystyle}

\renewcommand{\P}{\mathbb P}
\newcommand{\Q}{\mathbb Q}

\newcommand{\Z}{\mathbb Z}

\DeclarePairedDelimiter{\floor}{\lfloor}{\rfloor}
\DeclarePairedDelimiter{\ceil}{\lceil}{\rceil}

\title[Realizability in the Orthogonal Grassmannian]{Realizability of Cohomology Classes in Orthogonal Grassmannians}
\author{Naima Nader}
\address{University of Illinois Chicago}
\email{nnade@uic.edu}

\begin{document}

\begin{abstract}
Building on the work of Coskun and Ross, we investigate which cohomology classes 
of the orthogonal Grassmannian $OG(k,n)$ can be realized as irreducible subvarieties 
of $OG(k,n)$. We give a criterion for when the realizable classes in the orthogonal 
Grassmannian agree with those in the Grassmannian. We give a characterization for 
realizability in low dimension and codimension and give necessary conditions for 
realizability in certain orthogonal Grassmannians. 
\end{abstract}

\maketitle

\section{Introduction}
Let $X$ be a smooth complex projective variety. 
Given $Y \subset X$ a subvariety, we define the cohomology class $[Y]$ to be the Poincar\'e dual of the fundamental class of $Y$.
Following \cite[Definition 1.5]{HHMWW25}, we say that a cohomology class $\nu\in H^{2m}(X,\Z)$ is \newword{realizable over} $\Z$ if there exists a subvariety $Y\subseteq X$ with $\nu = [Y]$. 
A class $\nu\in H^{2m}(X,\Q)$ is \newword{realizable over} $\Q$ if $\lambda \nu$ is realizable over $\Z$ for some $\lambda\in \Q^+$.

In this paper, we address the question of realizability of cohomology classes of the orthogonal Grassmannian.
Specifically, let $V$ be an $n$-dimensional complex vector space and $Q_0$ a non-degenerate symmetric bilinear form on $V$.
For $2k \leq n$, we denote by $OG(k,n)$ the orthogonal Grassmannian parameterizing isotropic $k$-planes in $V$ with respect to $Q_0$. 
\begin{question}\label[question]{question1}
When is a cohomology class $\nu\in H^{2m}(OG(k,n),\Z)$ the class of an irreducible subvariety of $OG(k,n)$?
When is $\nu\in H^{2m}(OG(k,n),\Q)$ realizable over $\Q$?
\end{question}

We show that for fixed $m$ and $k$, when $n$ is large enough, the only realizable classes in $OG(k,n)$ are those coming from the Grassmannian, with explicit bounds given in \Cref{prop:iff}.
For sufficiently small dimension and codimension this reduces questions of realizability from $OG(k,n)$ to $G(k,n)$, which we resolve using \cite{CR26}.

For example, under the conditions $k>2$ and $n>2k+4$, we will see that every effective cohomology class in $H^{2N-4}(OG(k,n),\Z)$ is realizable as an irreducible surface (where $N=\dim OG(k,n)$). 
Note that, whenever $X$ has dimension $N$ and $Y \subseteq X$ dimension $r$, the cohomology class $[Y]$ belongs to $H^{2N-2r}(X, \Z)$.
For notational convenience, in expressions of this form, $N$ will always denote the dimension of the ambient variety.

Schubert classes form an additive basis for the cohomology of $OG(k,n)$, and are parameterized by $k$-tuples $(a_\bullet;b_\bullet)$, as explained in \Cref{sec:notation}. 
Any effective class is a nonnegative linear combination of Schubert classes \cite[Proposition 2.20]{C18}. 
When $k=2$ we prove the following:

\begin{utheorem}[\Cref{thm:surfaces-OG2n}]
Let $\nu=a\sigma_{1,4;} + b\sigma_{2,3;}$ be a non-zero integral cohomology class in $H^{2N-4}(OG(2,n),\Z)$, with $n\ge 9$. 
Then $\nu$ is realizable over $\Z$ if and only if $a>0$ and $b\ge 0$ or $(a,b)=(0,1)$.
\end{utheorem}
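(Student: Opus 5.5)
The plan is to reduce the statement to the corresponding result for surfaces in the ordinary Grassmannian $G(2,n)$ from \cite{CR26}, using the comparison of realizable classes in $OG(k,n)$ and $G(k,n)$ in \Cref{prop:iff}. Throughout I use the geometric description of the two Schubert surfaces, which I have not yet checked against the indexing conventions of \Cref{sec:notation}. One is the family of isotropic lines contained in a fixed isotropic $\P^2$; this is a dual plane. The other is the family of lines through a fixed point $p$ inside a fixed isotropic $\P^3$ containing $p$; this is also a $\P^2$. The first task is to match these with $\sigma_{1,4;}$ and $\sigma_{2,3;}$. I expect the dual plane to be the class with the exceptional realizable value $(a,b)=(0,1)$, by analogy with $G(2,n)$.

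\emph{Necessity.} Let $Y\subseteq OG(2,n)$ be an irreducible surface with $[Y]=\nu$. Let $\iota\colon OG(2,n)\hookrightarrow G(2,n)$ be the inclusion. Then $\iota(Y)$ is an irreducible surface in $G(2,n)$, and its class is $\iota_*\nu$. I would compute $\iota_*$ on $H^{2N-4}$ by tracking the two Schubert surfaces above. Each is a Schubert surface in $G(2,n)$ as well, so $\iota_*$ sends $\sigma_{1,4;}$ and $\sigma_{2,3;}$ to the two dimension-two Schubert classes of $G(2,n)$, possibly with positive integer multiplicities. I would pin these multiplicities down by intersecting with Schubert curves. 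The hypothesis $n\ge 9$ presumably places us in the range of \Cref{prop:iff}, where this identification is an isomorphism compatible with the Schubert bases. The classification of surface classes in $G(2,n)$ from \cite{CR26} then forces either $a>0$ and $b\ge0$, or $(a,b)=(0,1)$. The point to verify carefully is that the multiplicities are $1$. Otherwise the exceptional case would be transported incorrectly, for example $(0,1)$ would be sent to $(0,2)$.

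\emph{Sufficiency.} Since $n\ge 9$, there is an isotropic subspace $W\subseteq V$ with $\dim W=\lfloor n/2\rfloor\ge4$. Then $G(2,W)\subseteq OG(2,n)$, and every Schubert surface of $G(2,W)$ is a Schubert surface of $OG(2,n)$ of the same type. So it is enough to realize each class $a\sigma+b\sigma'$ with $a>0$ and $b\ge0$ by an irreducible surface in $G(2,4)$. I would build these explicitly as follows.
\begin{enumerate}[(i)]
\item Take a general surface swept out by a suitable one-parameter family of lines.
\item Alternatively, take the image of a surface under a linear section of the Plücker quadric $G(2,4)\subseteq\P^5$. For example, a smooth quadric surface $G(2,4)\cap\P^3$ has class $\sigma_{3,1}+\sigma_{2,2}$, and higher-degree complete intersections give other classes.
\end{enumerate}
I would cite \cite{CR26} for the fact that all these classes are realized in $G(2,n)$ with $n\ge4$. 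Their construction should then live inside $G(2,4)\cong G(2,W)$, and it transfers to $OG(2,n)$. Finally, $(0,1)$ is realized by the Schubert surface itself.

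\emph{Main obstacle.} I expect the hardest step to be the compatibility claim: that pushforward under $\iota$, and under $G(2,W)\subseteq OG(2,n)$, identifies the Schubert bases with multiplicity one. A related check is that the realizing surfaces of \cite{CR26} in $G(2,n)$ can be chosen inside some $G(2,4)$, and not only in a larger Grassmannian where they might fail to be isotropic. If that fails, I would instead construct the surfaces directly inside the isotropic $G(2,W)$, using families of lines on ruled surfaces in $\P(W)\cong\P^3$. This is enough because all classes $a\sigma+b\sigma'$ with $a>0$ and $b\ge0$ already arise from rational normal scrolls and cones in $\P^3$.
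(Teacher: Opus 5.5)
Your necessity argument is fine. Pushing forward along $OG(2,n)\hookrightarrow G(2,n)$ sends $\sigma_{1,4;}$ and $\sigma_{2,3;}$ to $\sigma_{1,4}$ and $\sigma_{2,3}$ with multiplicity one, because the Grassmannian-type Schubert surfaces are literally Schubert surfaces of $G(2,n)$ for a flag extending the isotropic one. Then \cite[Theorem 5.1]{CR26} for $n>4$ finishes. This does not come from \Cref{prop:iff}, however. For $n=9$ one has $k+r=4=\lfloor 9/2\rfloor$, so the proposition does not apply. The only role of $n\ge 9$ here is that, by \Cref{lem:grass-type}, $H^{2N-4}$ is spanned by the two Grassmannian-type classes.

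The genuine gap is in sufficiency. Your reduction (``it is enough to realize each class with $a>0$, $b\ge 0$ in $G(2,4)$'') is false. By part (2) of \cite[Theorem 5.1]{CR26}, quoted in the paper, $a\sigma_{1,4}$ with $a\ge 2$ is \emph{not} realizable in $G(2,4)$. Classically, a line congruence in $\P^3$ with no lines in a general plane is the star of lines through a point, which has $a=1$. So several of your steps fail on exactly the classes $a\sigma_{1,4;}$, $a\ge 2$:
\begin{enumerate}[(i)]
\item your citation, since \cite{CR26} needs $n>4$, not $n\ge 4$;
\item your expectation that the \cite{CR26} constructions live in some $G(2,4)$;
\item your fallback via scrolls and cones in $\P^3$.
\end{enumerate}

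For $n\ge 10$ the repair is easy. Since $\lfloor n/2\rfloor\ge 5$, you can take an isotropic $W$ with $\dim W=5$ and work in $G(2,5)$, where part (1) of \cite[Theorem 5.1]{CR26} applies.

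For $n=9$, every isotropic subspace has dimension at most $4$. Hence no surface lying in some $G(2,W)$ can have class $a\sigma_{1,4;}$ with $a\ge 2$, and you need a genuinely orthogonal construction whose lines do not all lie in one isotropic $\P^3$. The paper uses the map $\iota\colon OG(1,7)\to OG(2,9)$:
\begin{enumerate}[(i)]
\item Fix $p\in Q\subset\P^8$. Lines of $Q$ through $p$ are parametrized by a smooth quadric $5$-fold $Q'$.
\item An irreducible surface $X\subset Q'$ of degree $a$ (for instance, a cone over a degree-$a$ curve on a quadric surface) has class $a\sigma_{3;}$.
\item Then $\iota(X)$ is an irreducible surface of class $\iota_*(a\sigma_{3;})=a\sigma_{1,4;}$.
\end{enumerate}
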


In codimension 2, we classify all realizable classes when $k>2$ and $n>2k+4$ and when $k=2$ and $n>8$ (\Cref{thm:codim2}). 
When the Picard rank of $OG(k,n)$ is 2 we prove:

\begin{utheorem}[\Cref{thm:pic2}]
Let $\nu=a\sigma_{1,2,\ldots,k-2,k,k+1;}+b\sigma_{1,2,\ldots,k-1;k-1}+c\sigma_{1,2,\ldots,k-2,k;k}$ be a non-zero integral cohomology class in $H^{2N-4}(OG(k,2(k+1)),\Q)$.
If $\nu$ is realizable over $\Q$, then $a,b,c\ge 0$ and $b^2\ge ac$. 
Conversely, if $a,b,c\ge 0$, $a+c\ge b$ and $b^2\ge ac$, then $\nu$ is realizable over $\Q$.
\end{utheorem}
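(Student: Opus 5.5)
The plan is to work with the two natural projections of $OG(k,2k+2)$ and to turn realizability into numerical conditions on nef divisors restricted to a surface. An isotropic $k$-plane $W\subset V$, $\dim V=2k+2$, has $W^\perp/W$ a $2$-dimensional quadratic space containing exactly two isotropic lines. So $W$ lies in exactly one maximal isotropic subspace $\Lambda_+\supset W$ from each of the two families. This gives morphisms $p_\pm\colon OG(k,2k+2)\to S_\pm$ to the two spinor varieties. Each fibre of $p_\pm$ is the $\P^k$ of hyperplanes in the corresponding $\Lambda_\pm$. Set $h_\pm=p_\pm^*\mathcal O(1)$. Since the Picard rank is $2$, $h_+$ and $h_-$ span the nef cone. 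A surface class $\nu\in H^{2N-4}$ is then determined by the three numbers $h_+^2\cdot\nu$, $h_+h_-\cdot\nu$ and $h_-^2\cdot\nu$.

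\textbf{Step 1: identify $a,b,c$ with these intersection numbers.} I would compute $h_\pm$ as combinations of the divisor Schubert classes, using Pieri/Chevalley rules for $OG$ as set up in \Cref{sec:notation}. I would then pair $h_+^2$, $h_+h_-$ and $h_-^2$ against the three surface Schubert classes. I expect the duality to come out, up to positive constants, as
\[
a=h_+^2\cdot\nu,\qquad b=h_+h_-\cdot\nu,\qquad c=h_-^2\cdot\nu .
\]
The mixed class $\sigma_{1,\dots,k-1;k-1}$ should correspond to $h_+h_-$, and the other two classes to a surface lying in a single fibre of $p_-$, respectively $p_+$. The normalizing constants (possibly a factor of $2$) are harmless over $\Q$, but they must be tracked so that the inequalities come out as stated.

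\textbf{Step 2: necessity.} Let $Y$ be an irreducible surface with $[Y]=\lambda\nu$, and let $\pi\colon\tilde Y\to Y$ be a resolution. The classes $D_\pm=\pi^*h_\pm|_Y$ are nef on the smooth projective surface $\tilde Y$, so $D_+^2$, $D_+D_-$ and $D_-^2$ are all $\ge0$. This gives $a,b,c\ge0$. By the Hodge index theorem, for nef $D_\pm$ we have $(D_+D_-)^2\ge D_+^2D_-^2$, and by Step 1 this is exactly $b^2\ge ac$.

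\textbf{Step 3: sufficiency.} Here the goal is to construct irreducible surfaces $S\to OG(k,2k+2)$, finite onto their image, with prescribed pairs $(D_+,D_-)$, and then use that over $\Q$ we may scale. I would use three families of surfaces.
\begin{enumerate}[(i)]
\item A surface of degree $d$ inside a fibre $\P^k$ of $p_+$. This gives multiples of the class with only $c\ne0$, and symmetrically for $p_-$, only $a\ne0$.
\item Surfaces $P^1\times P^1\to OG$ sending the first factor to a line in a fibre of $p_+$ and the second factor to a curve whose $p_+$-image is a line in $S_+$. These give $D_\pm$ of bidegrees $(\alpha,\beta)$ and $(\gamma,\delta)$, hence
\[
(a,b,c)=(2\alpha\beta,\;\alpha\delta+\beta\gamma,\;2\gamma\delta).
\]
\item Surfaces with $D_+$ and $D_-$ proportional to a single ample class, for instance a Veronese-type image of $P^2$ or a surface on which both projections are finite. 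These give the boundary points $b^2=ac$.
\end{enumerate}
I would then check that the rational classes obtained from these families, together with their rescalings, exhaust the region $a,b,c\ge0$, $a+c\ge b$, $b^2\ge ac$. The condition $a+c\ge b$ is presumably exactly what these constructions can reach.

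\textbf{Main obstacle.} I expect Step 3 to be the hardest. Whether a morphism $P^1\times P^1\to OG$ with given bidegrees exists is constrained by the geometry of the incidence correspondence: when one factor moves in a fibre of $p_+$, its behaviour under $p_-$ is forced. One must also ensure irreducibility and finiteness, so that the class is not a multiple of a lower-dimensional image. My first attempt would be to parametrize the explicit rational curves in $OG(k,2k+2)$: lines in fibres of $p_\pm$, and lines mapping to lines in $S_\pm$. I would take products or families of these, compute the resulting $(a,b,c)$ directly, and check that their positive rational cone is the claimed one.
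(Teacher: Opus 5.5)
Your necessity argument is the paper's argument. The paper's divisors $H_1,H_2$ (incidence with a fixed maximal isotropic subspace of each family) are your $h_\pm$. The paper checks by Schubert calculus that the intersection numbers of $H_1^2,H_1H_2,H_2^2$ with $\nu$ are exactly $c,b,a$ for $k$ even (and $a,b,c$ for $k$ odd), then applies the Hodge index theorem. Constants would not be harmless in general: $b^2\ge ac$ is invariant under rescaling $h_+$ and $h_-$, but not under rescaling $a,b,c$ independently. Here they are all $1$, as you guessed. $\Sigma_{1,\ldots,k-1;k-1}$ is a smooth quadric surface on which $h_\pm$ are the two rulings. The other two Schubert surfaces are linear planes in a fibre of one projection, on which the other $h$ restricts to $\mathcal{O}(1)$.

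The sufficiency half has a genuine gap. Realizability over $\Q$ is stable under positive rescaling but not under addition: a sum of classes of irreducible surfaces is in general not the class of an irreducible surface. So every ray in the interior of the cone $a,b,c\ge 0$, $a+c\ge b$, $b^2\ge ac$ must be hit by a single irreducible surface, and none of your families does this.
\begin{itemize}
\item Family (i) gives only the rays spanned by $\sigma_{1,\ldots,k-2,k,k+1;}$ and $\sigma_{1,\ldots,k-2,k;k}$.
\item Family (iii) lies on $b^2=ac$ by design.
\item Family (ii) is internally inconsistent. If $s\mapsto f(s,t)$ lies in a fibre of $p_+$ for every $t$, then $p_+\circ f$ factors through the second projection. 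So $D_+$ has bidegree $(0,\beta)$, i.e.\ $\alpha=0$ and $D_+^2=0$, and these surfaces also lie on a boundary face.
\end{itemize}
Consistently with this, nothing in your plan explains the condition $a+c\ge b$.

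The paper reaches the interior using a single irreducible object one dimension up.
\begin{itemize}
\item For $k=2$, $OG(2,6)\cong F(1,3;4)$ is a smooth $(1,1)$-divisor in $\P^3\times\P^{3*}$.
\item By Huh's theorem \cite{H12}, up to scaling, any class $a'h_1^3+b'h_1^2h_2+c'h_1h_2^2+d'h_2^3$ with log-concave coefficients and no internal zeros is the class of an irreducible threefold $X$.
\item Cutting $X$ with a general $(1,1)$-divisor gives, by Bertini, an irreducible surface in $OG(2,6)$ of class $(a'+b')\sigma_{23;}+(b'+c')\sigma_{1;1}+(c'+d')\sigma_{2;2}$.
\item \Cref{lem:abcd} shows that the triples $(\alpha,\beta,\gamma)=(a'+b',b'+c',c'+d')$ obtainable this way are exactly those with $\beta^2\ge\alpha\gamma$ and $\alpha+\gamma\ge\beta$. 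The latter is just $a'+b'+c'+d'\ge b'+c'$, which is where $a+c\ge b$ comes from.
\item The map $\iota\colon OG(k,2k+2)\to OG(k+1,2k+4)$ carries the three Schubert surface classes to the corresponding ones, which handles all $k$.
\end{itemize}
Your Step 3 lacks an input of this kind: some irreducible family realizing interior classes, which the paper gets from Huh's classification plus Bertini.
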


\begin{remark}
Note that $OG(1,n)$ is isomorphic to a quadric hypersurface $Q\subset \P^{n-1}$.
The Schubert varieties are linear spaces on $Q$ and singular quadrics. 
Every effective class is realizable with the following exceptions: $a[p]$, $a>1$ for a point $p\in Q$; $a[Q]$, $a>1$; and $a\P L_k$ when $a>1$ and $k=n/2$.
This is an example of a multi rigid class, see \cite[Example 1.3]{C14}.
\end{remark}

A related problem is that of \newword{smoothability} of cohomology classes; a cohomology class $\nu$ is said to be smoothable if there exists a smooth subvariety $Y$ with $\nu = [Y]$.
Smoothability has been a subject of study for many authors, and partial results are understood for the Grassmannian.
Less is known for orthogonal Grassmannians.
Rigidity results from \Citeauthor{L25} in \cite{L25} tell us that when a class is rigid and the Schubert variety is singular, then the class is not smoothable.
In \cite{S18}, \Citeauthor{S18} classified smoothness of restriction varieties, which certain classes can be represented by.
Our results give necessary conditions for smoothability in orthogonal Grassmannians.

\begin{comparison}
Thus far, \Cref{question1} has only been studied for multiples of Schubert classes \cite{C14, L25}.
It has been studied for multiples of Schubert classes in other homogeneous varieties, in \cite{B01, C11, C14, C18, CR13, H05, H07, HM13, L25, LSY24a, LSY24b, RT12, W97}. 
A complete classification of multiples of Schubert classes that are realizable over $\Z$ in compact complex Hermitian symmetric spaces is given in \cite{CR13, RT12}.
The question of realizability of cohomology classes in the Grassmannian has been studied most recently by \Citeauthor{CR26} in \cite{CR26} and \Citeauthor{HHMWW25} in \cite{HHMWW25}.
We also build off of \Citeauthor{H12}'s work on realizable classes in products of projective spaces \cite{H12, H14}.
\end{comparison}

\begin{ack}
I was partially supported by an NSF RTG DMS 2037569.
Thank you to my advisor Izzet Coskun for his guidance and support throughout the project. GPT-6 Astra was used during editing and produced minor corrections. 
\end{ack} 


\section{Preliminaries}

\subsection{Notation}\label{sec:notation}
$OG(k,n)$ parameterizes isotropic $k$-planes in an $n$ dimensional complex vector space $V$ with respect to a non-degenerate symmetric bilinear form $Q_0$. 
We can interpret these as projective $(k-1)$-planes on a quadric hypersurface $Q\subset \P^{n-1}$, where $Q$ is cut out by the homogeneous quadratic polynomial associated to $Q_0$. 
When $2k=n$ the paramater space of isotropic $k$-planes has two isomorphic connected components, and we denote by $OG(k,2k)$ one component.

We index Schubert classes in $OG(k,n)$ by sequences $(a_\bullet;b_\bullet)$ 
of length $k$ such that $0< a_1<\cdots<a_s\le \floor*{n/2}$ and 
$\floor*{n/2}-1\ge b_{s+1}>\cdots>b_k\ge 0$
with the property that $a_i\neq b_j+1$ for all $i$ and all $j$. 
Fix an isotropic flag 
\[F_\bullet: F_1\subset F_2\subset\cdots\subset F_{\floor*{n/2}}.\]
In the case that $n$ is even, we must be able to differentiate between the two connected
components of the space of half dimensional linear spaces. We will notate half 
dimensional linear spaces in one component by $F_{n/2}$ and in the other by $F_{n/2-1}^\perp$.
We caution the reader that although the projectivization of the everywhere tangent space
to an isotropic $n/2-2$ plane is the union of two isotropic $\frac{n}{2}-1$ planes, 
$F_{n/2-1}^\perp$ only designates one of the planes: the one that is not 
in the same connected component as $F_{n/2}$. 
The Schubert variety $\Sigma_{a_\bullet;b_\bullet}(F_\bullet)$ is the Zariski closure of 
\[
\Sigma_{a_\bullet;b_\bullet}^\circ(F_\bullet) := \left\{
    [W]\in OG(k,n) \quad \middle\vert \quad
\begin{aligned}
    &\dim(W\cap F_{a_i})=i,\;\forall\; 1\le i\le s,\\
    &\dim(W\cap F_{b_j}^\perp)=j, \;\forall \;s+1\le j\le k.
\end{aligned}
\right\}
\]
Let $\sigma_{a_\bullet;b_\bullet}=[\Sigma_{a_\bullet;b_\bullet}]$ denote its cohomology class. 

Following \cite[Definition 1.1]{C14}, a Schubert class in $OG(k,n)$ is said to be of 
\newword{Grassmannian type} if every flag element in its definition is isotropic and 
none of these flag elements is of dimension $n/2$, i.e. $s=k$ and $a_k<n/2$.
Dually, a Schubert class is of \newword{quadric type} if $s=0$ and $b_1<n/2-1$.
Grassmannian and quadric type classes are dual under the intersection pairing. 

Sometimes we will need to refer to Schubert classes in the Grassmannian $G(k,n)$. 
We index these by sequences $(a_\bullet)$ of length $k$ such that $1\le a_1<\cdots<a_k\le n$ 
and fix an isotropic flag 
\[F_\bullet: F_1\subset F_2\subset\cdots\subset F_{n}.\]
The Schubert variety $\Sigma_{a_\bullet}(F_\bullet)$ is the Zariski closure of 
\[\Sigma_{a_\bullet}^\circ(F_\bullet) := \{[W]\in G(k,n)\;|\; \dim(W\cap F_{a_i})=i, 1\le i\le k\}.\]
Let $\sigma_{a_\bullet}=[\Sigma_{a_\bullet}]$ denote its cohomology class. 
The Schubert classes form an additive basis for the cohomology of $G(k,n)$. 

\subsection{Maps between Grassmannians}\label{sec:maps}

\begin{remark}
The relation to the usual notation for Schubert classes in the Grassmannian given by 
sequences $(\lambda_\bullet)$ where $n-k\ge \lambda_1\ge \lambda_2\ge \ldots\ge \lambda_k\ge 0$ 
is that $a_i=n-k+i-\lambda_i$. 
The advantage of our notation is that it is compatible with maps between Grassmannians. 
That is, given the inclusion $i:G(k,n)\to G(k,n+1)$, we have $i_*\sigma_{a_\bullet}=\sigma_{a_\bullet}$.  

Here, and in the rest of the paper, given a morphism $f:X\to Y$, $f_*$ is a pushforward map in cohomology, called the Gysin map. 
It is defined using the pushforward map in homology and Poincar\'e duality, as seen here 
\[ \begin{tikzcd}
 H^k(X)  \ar[d, "\text{P.D.}"] \ar[r, "f_*", dashed]  & H^{k-(2n-2m)}(Y) \ar[d, "\text{P.D}"] \\
 H_{2n-k}(X) \ar[r, "f_*"] & H_{2n-k}(Y)
\end{tikzcd}\]
where $n=\dim X$ and $m=\dim Y$.
\end{remark}

We also have a natural inclusion map \[i:OG(k,n)\to G(k,n).\]
Given a Schubert class $\sigma_{a_\bullet;}\in H^*(OG(k,n))$ of Grassmannian type,
we have $i_*\sigma_{a_\bullet;}=\sigma_{a_\bullet}\in H^*(G(k,n))$. 

On the other hand, whenever $k\le m\le \floor*{n/2}$, after fixing 
isotropic $F_m$, we have the inclusion \[\phi_{m,n}:G(k,m)\to OG(k,n).\]
Given a Schubert class $\sigma_{a_\bullet}\in H^*(G(k,m))$,
we have $\phi_*\sigma_{a_\bullet}=\sigma_{a_\bullet;}\in H^*(OG(k,n))$. 

There is also a map \[\iota:OG(k,n)\to OG(k+1,n+2)\] defined as follows. 
Let $Q\subset \P^{n+1}$ be a smooth quadric hypersurface and $p$ a point in $Q$.
Then if we take the tangent hyperplane section at $p$ we get a cone with vertex $p$ over a smooth quadric $(n-2)$-fold, $Q'\subset Q$. Given an isotropic $(k-1)$-plane $\Lambda\subset Q'$ we get a unique isotropic $k$-plane in $Q$ by taking $\Lambda'=\overline{p,\Lambda}$. 
Then we have \[\iota:[\Lambda]\mapsto [\overline{p,\Lambda'}].\]
And one can check that \[\iota_*\sigma_{a_\bullet;b_\bullet} = \sigma_{a'_\bullet;b'_\bullet}.\]
where $a'_1=1$, $a'_i=a_{i-1}+1$ for all $2\le i\le s+1$, and $b'_j=b_{j-1}+1$ for all $s+2\le j \le k+1$.

Following the notation in \cite{CR26}, we let 

\begin{align*}
    I_r^R(OG(k,n))&:=\left\{\alpha(a_\bullet,b_\bullet)\in R^k\;\middle| \; \begin{gathered}\sum \alpha(a_\bullet,b_\bullet) \sigma_{(a_\bullet,b_\bullet)}\in H^{2N-2r}(OG(k,n),R) \\ \text{ is realizable over } R \end{gathered} \right\}\\
    I_r^R(G(k,n))&:=\left\{\alpha(a_\bullet)\in R^k\;\middle| \; \sum \alpha(a_\bullet) \sigma_{(a_\bullet)}\in H^{2N-2r}(G(k,n),R) \text{ is realizable over } R  \right\}
\end{align*}
where $R$ is either $\Z$ or $\Q$.


\section{Realizability}

\subsection{Connection to Grassmannian}

We will frequently refer to the following lemma.

\begin{lemma}\label[lemma]{lem:grass-type}
When $2(k+r) < n$ then every class $\nu\in H^{2N-2r}(OG(k,n),\Z)$ is a 
linear combination of classes of Grassmannian type.
By duality, every class $\nu\in H^{2r}(OG(k,n),\Z)$ is a 
linear combination of classes of quadric type.
\end{lemma}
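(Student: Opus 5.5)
Since the Schubert classes $\sigma_{a_\bullet;b_\bullet}$ form an additive basis of $H^*(OG(k,n),\Z)$, I would prove the lemma by a dimension count on Schubert varieties. The plan is to show that when $2(k+r)<n$, every Schubert class of dimension $r$ is of Grassmannian type, i.e.\ has $s=k$ and $a_k<n/2$. Then any $\nu\in H^{2N-2r}$ is an integer combination of such classes. The dual statement follows because the intersection pairing identifies $H^{2r}$ with the dual of $H^{2N-2r}$ and sends Grassmannian type classes to quadric type classes, as recalled in \Cref{sec:notation}. Concretely, I would observe that the Poincar\'e dual of an $r$-dimensional Schubert class is a codimension-$r$ Schubert class, and apply the first part to the dual index.

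The main step is a formula for $\dim \Sigma_{a_\bullet;b_\bullet}$. I would use the standard count: fill $W$ by choosing successive vectors.
\begin{itemize}
\item For $i\le s$, the $i$-th vector lies in $F_{a_i}$ modulo the previous ones. Since $F_{a_i}$ is isotropic, this contributes $a_i-i$ dimensions.
\item For $j>s$, the $j$-th vector lies in $F_{b_j}^\perp$, which has dimension $n-b_j$. It must be isotropic and orthogonal to the $j-1$ previous vectors, and it is taken modulo them.
\end{itemize}
This gives roughly $n-b_j-2j$ dimensions for the $j$-th step, with small corrections from the condition $a_i\neq b_j+1$ and from how many $a_i$ are at most $b_j$. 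I would either cite the standard formula (e.g.\ from \cite{C14}) or justify it by this fibration argument. The only consequence I need is a lower bound: if $s<k$, then already the last step contributes at least about $n-b_k-2k\ge n-(\floor{n/2}-1)-2k$. So $\dim\Sigma\ge \lceil n/2\rceil+1-2k$, up to a bounded correction.

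I then have to verify that this forces $\dim>r$ whenever $2(k+r)<n$, and likewise when $s=k$ but $a_k\ge n/2$. In the second case, the contribution $a_k-k\ge \floor{n/2}-k$ already exceeds $r$, since $r<n/2-k$. In the first case, I would take the cleanest lower bound: the Schubert variety contains the locus where the $k$-th vector moves in $F_{b_k}^\perp\cap W'^\perp\cap Q$ modulo $W'$. This gives at least $n-b_k-2(k-1)-2\ge n/2-2k+1$ dimensions. That is weaker than needed, so I would refine it using that the first $k-1$ steps contribute at least $1+2+\cdots$, since the indices are strictly monotone.

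Getting the bookkeeping exactly right is the hard step. I expect the sharp inequality to come from the known description of the minimal-dimensional non-Grassmannian-type class, namely $\sigma_{1,\dots,k-1;\floor{n/2}-1}$ or its analogue with $a_k=\floor{n/2}$. I would compute the dimension of this class directly (it is $\floor{n/2}-k$ or $n-\floor{n/2}-k-1$ type, i.e.\ at least $\approx n/2-k$). I would then argue by Bruhat order, since every non-Grassmannian-type Schubert variety contains one of these minimal ones. Together with $r<n/2-k$, this finishes the proof.
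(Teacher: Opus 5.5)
Your framework matches the paper's: use the Schubert basis, bound below the dimension of every non-Grassmannian-type Schubert variety, and get the second statement by duality. Your treatment of the case $s=k$, $a_k=n/2$ is correct. The gap is in the case $s<k$, which is the real content of the lemma. The lemma is sharp (for $n$ even there are non-Grassmannian classes of dimension exactly $n/2-k$), so bounds of the form ``$\approx n/2-k$'' do not suffice.

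\textbf{The fibration count.} Your count omits the essential term. For $j>s$, orthogonality of the $j$-th vector to an earlier $w_i\in F_{a_i}$ is automatic when $a_i\le b_j$: then $w_i\in F_{b_j}$, and the new vector already lies in $F_{b_j}^\perp$. So the $j$-th step contributes $n-b_j-2j+x_j$, where $x_j=\#\{i\le s : a_i\le b_j\}$. This $x_j$ is exactly the term in \cite[Proposition 4.15]{C18} that the paper minimizes. Without it, your bound $\ceil{n/2}+1-2k$ falls short by $k-1$, which is $x_k$ in the extremal case. The refinement you propose, that the first $k-1$ steps contribute at least $1+2+\cdots$, is false. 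A Grassmannian-type step contributes $a_i-i$, which vanishes when $a_i=i$, and that is precisely the extremal configuration $\Sigma_{1,\ldots,k-1;\floor{n/2}-1}$.

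\textbf{The Bruhat-order fallback.} This is a legitimate route, and cleaner than the paper's optimization. From the incidence conditions one checks that every $\Sigma_{a_\bullet;b_\bullet}$ with $s<k$ contains $\{W : F_{k-1}\subset W\subset F^\perp_{\floor{n/2}-1}\}$. Likewise every $\Sigma_{a_1,\ldots,a_{k-1},n/2;}$ contains $\{W : F_{k-1}\subset W\subset F_{n/2}\}$. For $n$ odd, the class with $a_k=\floor{n/2}$ is of Grassmannian type and plays no role. But this route only closes with exact dimensions. The value you state for the first minimal class, $n-\floor{n/2}-k-1=\ceil{n/2}-k-1$, is off by one. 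With it, the case $n$ even and $r=n/2-k-1$ would not be excluded, even though it satisfies $2(k+r)=n-2<n$.

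The correct value is $\ceil{n/2}-k$. Here $W/F_{k-1}$ is an isotropic line in $F^\perp_{\floor{n/2}-1}/F_{k-1}$.
\begin{itemize}
\item For $n$ odd, these lines form a quadric cone of dimension $\frac{n+1}{2}-k$ in $\P^{(n+3)/2-k}$.
\item For $n$ even, $F^\perp_{n/2-1}$ denotes an isotropic $n/2$-plane, so the locus is all of $\P(F^\perp_{n/2-1}/F_{k-1})\cong\P^{n/2-k}$.
\end{itemize}
With these exact values and the containment check, every non-Grassmannian-type Schubert variety has dimension at least $n/2-k>r$, and the proof closes.
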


\begin{proof}
By \cite[Proposition 4.15]{C18}, given a Schubert class in $OG(k,n)$, we have
\[\dim \Sigma_{a_\bullet,b_\bullet}=(k-s)(n-k-s-1)-\frac{s(s+1)}{2}+\sum_{i=1}^sa_i+\sum_{j=s+1}^{k}(x_j-b_j)\]
where $x_j:=\#\{i\;|\;a_i\le b_j\}$.
We will first minimize this quantity while having $s<k$. 
For a fixed choice of $k,s,n$, the naive approach might be to make the $a_i$ as small as possible ($a_i=i$)
and the $b_j$ as large as possible ($b_{s+l}=\floor{n/2}-l$). 
Since $\floor{n/2}-(k-s)\ge s$, this will mean that $x_j=s$ for all $j$. 
Now suppose we could do better. The only quantity we can improve on is making some $x_j$ smaller. 
But to decrease $x_j$ by one we would have to increase some $a_i$ by at least two, or decrease $b_j$ by at least two, 
or increase some $a_i$ by one and decrease some $b_j$ by one which would result in a net increase of dimension. 
Hence the initial approach is optimal.
We obtain \[\dim \Sigma_{a_\bullet,b_\bullet}= (k-s)\left(\ceil*{\frac{n}{2}} -\frac{k}{2}-\frac{s}{2}-\frac{1}{2} \right).\]

One can easily verify that with $k,n$ fixed, this is minimized when $s=k-1$. 
Therefore the smallest Schubert variety with $s<k$ is given by 
$\Sigma_{1,2,\ldots,k-1;\floor{n/2}-1}$ which has dimension $\ceil{n/2}-k$.
Hence if $r<n/2-k$ then the class must be of the form $\sigma_{a_\bullet;}$.
However, as we saw in \Cref{sec:notation}, a Grassmannian type Schubert class is one where $s=k$ and $a_k<n/2$. 
Hence in the even dimensional case the smallest non-Grassmannian type Schubert variety is given by $\sigma_{1,2,\ldots, k-2, n/2}$ which has dimension $n/2-k$. 
Thus in $OG(k,n)$, any class with $r<n/2-k$ is of Grassmannian type. 
\end{proof}

We can use this to say when the realizable classes of the orthogonal Grassmannian are exactly the realizable ones in the Grassmannian.

\begin{proposition}\label[proposition]{prop:iff}
Let $\nu$ be a non-zero class in $H^{2N-2r}(OG(k,n),R)$ where $R=\Z$ or $R=\Q$. 
Then if $k+r<\floor{n/2}$, $\nu$ is realizable over $R$ in  
$OG(k,n)$ if and only if $\iota_*\nu\in H^{2N-2r}(G(k,n),R)$ is realizable over $R$ in $G(k,n)$.
\end{proposition}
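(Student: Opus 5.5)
The map $\iota_*$ in the statement should be read as the pushforward $i_*$ along the inclusion $i:OG(k,n)\to G(k,n)$. The plan is to compare realizability on both sides through this map and through the inclusion $\phi_{m,n}:G(k,m)\to OG(k,n)$ with $m=\floor{n/2}$. Since $k+r<\floor{n/2}$ gives $2(k+r)<n$, \Cref{lem:grass-type} applies. So $\nu=\sum \alpha(a_\bullet)\sigma_{a_\bullet;}$ is a combination of Grassmannian type classes only, and
\[
i_*\nu=\sum\alpha(a_\bullet)\sigma_{a_\bullet}\in H^{2N-2r}(G(k,n),R),
\]
where $N$ now denotes $\dim G(k,n)$. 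Every index here has $a_k<n/2$, so $a_k\le \floor{n/2}$, and these Schubert classes lie in the image of $(\phi_{m,n})_*$ from $G(k,m)$. The target is the chain of equivalences: $\nu$ realizable in $OG(k,n)$ $\Leftrightarrow$ $\sum\alpha\sigma_{a_\bullet}$ realizable in $G(k,m)$ $\Leftrightarrow$ $i_*\nu$ realizable in $G(k,n)$.

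\emph{Forward direction.} If $Y\subseteq OG(k,n)$ is irreducible with $[Y]=\nu$ (or with $[Y]=\lambda\nu$ in the $\Q$ case), then $i(Y)\subseteq G(k,n)$ is an irreducible subvariety. Since $i$ is a closed embedding, $[i(Y)]=i_*[Y]$. Hence $i_*\nu$ is realizable over $R$.

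\emph{Converse.} Suppose $Z\subseteq G(k,n)$ is irreducible of dimension $r$ with $[Z]=i_*\nu$ (or a positive multiple of it). The key step is to show that $Z$ lies in some $G(k,W)$ with $W$ an isotropic subspace of dimension $m$, or at least to replace $Z$ by such a variety with the same class. The class of $Z$ only involves $\sigma_{a_\bullet}$ with $a_k\le m$. Intersecting with the dual classes shows that $Z$ meets no Schubert variety with $a_k>m$ of complementary dimension. I would aim to use the following fact from \cite{CR26}: a class supported on Schubert classes with $a_k\le m$ is realizable in $G(k,n)$ if and only if it is realizable in $G(k,m)$. Concretely, the span of the $k$-planes of $Z$ then has dimension at most $m$. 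If that is not available directly, I would prove it by degenerating or projecting $Z$. The span $S$ of all planes parametrized by $Z$ satisfies $[Z]\cdot\sigma=0$ for the relevant dual classes, which forces $\dim S\le m$; this uses $r$ small relative to $n$, namely $k+r\le m$, which bounds $\dim S\le k+r$. Indeed, an $r$-dimensional irreducible family of $k$-planes sweeps out a span of dimension at most $k+r$, arguing inductively along a chain of curves. Once $S$ is a linear space of dimension at most $k+r<\floor{n/2}$, choose a linear isomorphism $g\in GL(V)$ sending $S$ into an isotropic subspace $F_m$. The translate $gZ$ has the same class, because $GL(V)$ is connected. Then $gZ\subseteq G(k,F_m)\xrightarrow{\phi_{m,n}}OG(k,n)$, and since $\phi_*\sigma_{a_\bullet}=\sigma_{a_\bullet;}$, the class of $gZ$ in $OG(k,n)$ is $\nu$ (times $\lambda$).

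The main obstacle is the bound on the span, $\dim S\le k+r$. It is false in general; for instance, a conic family of lines can span more. So I would instead argue via the class directly. Because $i_*\nu$ has no component with $a_k>m$, every plane of $Z$ lies in a fixed $m$-dimensional subspace. I would try to extract this from \cite{CR26}'s description of realizable classes as those supported in a sub-Grassmannian. That reduction of $Z$ into $G(k,m)$ is the step to verify carefully. After it, the $\Q$ and $\Z$ cases are handled identically, since the constructions preserve classes exactly.
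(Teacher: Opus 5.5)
Your architecture matches the paper's. You use Lemma~\ref{lem:grass-type} to reduce to Grassmannian-type classes and the closed embedding $OG(k,n)\hookrightarrow G(k,n)$ for the forward direction. For the converse you use an inclusion $I^R_r(G(k,n))\subset I^R_r(G(k,\floor{n/2}))$ followed by $(\phi_{m,n})_*$. The paper gets that inclusion by citing \cite[Proposition 3.8 (1)]{CR26}, whose hypothesis is exactly $r+k<\floor{n/2}$.

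Your proposal does not establish this step; you flag it yourself, but both forms in which you state it are wrong. The ``fact'' you attribute to \cite{CR26} carries no dimension hypothesis, and without one it is false. For $k=1$, a smooth quadric hypersurface in a $\P^m\subset\P^{n-1}$ realizes $2\sigma_m$ over $\Z$ in $G(1,n)$, while $2[\P^{m-1}]$ is not realizable over $\Z$ in $\P^{m-1}$. Here $k+r=m$, so your bound $k+r\le m$ is not enough either. Your fallback, that every plane of $Z$ lies in a fixed $m$-dimensional subspace, is also false. Vanishing of $[Z]\cdot\sigma$ against the duals of classes with $a_k>m$ only says that $Z$ misses general translates of those Schubert varieties. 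For example, every irreducible curve in $G(2,n)$ has class $d\,\sigma_{1,3}$, yet the rulings of a rational normal scroll spanning $\P^{n-1}$ form such a curve whose lines span all of $V$.

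So you cannot move $Z$ itself into a sub-Grassmannian; you must replace it by a different variety with the same class. One way to do this directly is to project from a general subspace $L\subset V$ of dimension $n-m$.
\begin{itemize}
\item The locus $\{W : W\cap L\neq 0\}$ has codimension $m-k+1>r$, so $W\mapsto (W+L)/L$ is a morphism on $Z$.
\item Two distinct planes $W_1,W_2$ have the same image iff $W_2\subset W_1+L$, which is a condition of codimension at least $m-k$ on $L$. A dimension count then shows the map is generically injective on $Z$ as soon as $r<m-k$. This is where the strict inequality enters.
\item The preimage of a Schubert variety $\Sigma_{b_\bullet}$ of $G(k,V/L)$ is the Schubert variety $\Sigma_{b_\bullet+(n-m)}$ of $G(k,n)$ for a general partial flag. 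These are exactly the duals of the $\sigma_{a_\bullet}$ with $a_k\le m$.
\end{itemize}
Hence the image of $Z$ has class $\sum\alpha(a_\bullet)\sigma_{a_\bullet}$ in $G(k,m)$, and $(\phi_{m,n})_*$ finishes the argument as you intended. The $GL(V)$-translation step is then unnecessary.
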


\begin{proof}
Since $k+r < \floor{n/2}$, by \Cref{lem:grass-type} we know $\nu$ is a combination of classes of Grassmannian type, 
$\nu=\sum \alpha(a_\bullet) \sigma_{a_\bullet}$.
We have maps \[G(k,\floor{n/2})\to OG(k,n)\to G(k,n)\] 
so then 
\[I^R_r(G(k,\floor{n/2}))\subset I^R_r(OG(k,n))\subset I^R_r(G(k,n)).\] 
On the other hand, since $r+k< \floor{n/2}$, by \cite[Proposition 3.8 (1)]{CR26}, 
\[I^R_r(G(k,n))\subset I^R_r(G(k,\floor{n/2})).\] 
Hence $I^R_r(G(k,n))=I^R_r(OG(k,n))$.
\end{proof}

As an immediate application of the proposition and \cite[Theorem 9.1]{CR26} we have

\begin{corollary}
Let $m<\floor{n/2}-2$. Then a nonzero cohomology class 
\[\nu = \smashoperator{\sum_{i=\max(0,m-\floor{n/2}+2)}^{\floor{m/2}}} a_i \sigma_{i+1,m+2-i;} \;\;\;\in H^{2N-2m}(OG(2,n),\Q)\]
is realizable over $\Q$ if and only if the coefficients $a_i$ 
form a log concave sequence of nonnegative rational numbers with no internal zeros.
\end{corollary}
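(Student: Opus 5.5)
The plan is to reduce the corollary to the Grassmannian statement via \Cref{prop:iff}, then read off the answer from \cite[Theorem 9.1]{CR26}. The class $\nu$ lies in $H^{2N-2m}(OG(2,n),\Q)$, i.e. it has dimension $r=m$ and $k=2$. The hypothesis $m<\floor{n/2}-2$ is exactly $k+r<\floor{n/2}$, so \Cref{prop:iff} applies with $R=\Q$. Hence $\nu$ is realizable over $\Q$ in $OG(2,n)$ if and only if its pushforward under the inclusion $OG(2,n)\to G(2,n)$ is realizable over $\Q$ in $G(2,n)$.

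Next I identify the pushforward. By \Cref{lem:grass-type}, which needs $2(k+r)<n$ and is implied by the hypothesis, every class of dimension $m$ is a combination of Grassmannian type classes. I will check that the dimension-$m$ Grassmannian type Schubert classes $\sigma_{a_1,a_2;}$ are exactly those with $a_1+a_2=m+3$, $1\le a_1<a_2<n/2$. Writing $a_1=i+1$, $a_2=m+2-i$:
\begin{itemize}
\item $a_1<a_2$ gives $i\le\floor{m/2}$;
\item $a_2\le\floor{n/2}$, and really $a_2<n/2$, gives the stated lower bound $i\ge\max(0,m-\floor{n/2}+2)$.
\end{itemize}
The dimension count uses that $\sigma_{a_\bullet}$ in $G(2,n)$ has dimension $\sum(a_j-j)$. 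Under $i_*$, each $\sigma_{a_\bullet;}$ goes to $\sigma_{a_\bullet}$, so $i_*\nu=\sum a_i\sigma_{i+1,m+2-i}$ with the same coefficients. Since $m+2-i\le m+2<\floor{n/2}$, the range of $i$ is in fact simply $0\le i\le\floor{m/2}$.

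Finally I invoke \cite[Theorem 9.1]{CR26}. It states that a nonzero class $\sum a_i\sigma_{i+1,m+2-i}$ of dimension $m$ in $G(2,n)$ (in the relevant range of $n$) is realizable over $\Q$ if and only if the $a_i$ form a log concave sequence of nonnegative rationals with no internal zeros. Applying it to $i_*\nu$ yields the corollary.

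The main point requiring care is matching indexing conventions and ranges. I need to confirm that the hypotheses of \cite[Theorem 9.1]{CR26} on $n$ relative to $m$ hold here; I expect they do, since $n>2m+4$ is large. I also need to check that the index set there coincides with ours, so that no coefficients are lost or added by the lower bound $\max(0,m-\floor{n/2}+2)$.
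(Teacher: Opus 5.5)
Your proposal is correct and follows the paper's argument: the hypothesis $m<\floor{n/2}-2$ is exactly the condition $k+r<\floor{n/2}$ of \Cref{prop:iff}, which reduces the question to $G(2,n)$, and \cite[Theorem 9.1]{CR26} then gives the log-concavity criterion. The only cosmetic difference is that the paper applies \cite[Theorem 9.1]{CR26} on $G(2,\floor{n/2})$ and transfers it to $G(2,n)$ using $m+2<\floor{n/2}$ (the stability in \cite{CR26}). That step is the one that settles your stated concern about the range of $n$ in that theorem.
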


\begin{proof}
\cite[Theorem 9.1]{CR26} says that the statement holds for $G(2,\floor{n/2})$. 
Since $m+2<\floor{n/2}$ the same holds for $G(2,n)$ and we can apply \Cref{prop:iff}
to get it in $OG(2,n)$.
\end{proof}


\subsection{Surfaces}

\begin{theorem}\label{thm:pic2}
Let $\nu=a\sigma_{1,2,\ldots,k-2,k,k+1;}+b\sigma_{1,2,\ldots,k-1;k-1}+c\sigma_{1,2,\ldots,k-2,k;k}$ be a non-zero integral cohomology class in $H^{2N-4}(OG(k,2(k+1)),\Q)$.
If $\nu$ is realizable over $\Q$, then $a,b,c\ge 0$ and $b^2\ge ac$. 
Conversely, if $a,b,c\ge 0$, $a+c\ge b$ and $b^2\ge ac$, then $\nu$ is realizable over $\Q$.
\end{theorem}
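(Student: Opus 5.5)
The plan is to use the Picard rank $2$ structure of $X=OG(k,2k+2)$. Every isotropic $k$-plane $W$ lies in exactly one maximal isotropic subspace from each of the two families. This gives an embedding
\[X\hookrightarrow S_1\times S_2,\]
where $S_1,S_2$ are the two spinor varieties. Let $D_1,D_2$ be the pullbacks of the ample generators of the two factors. Then $D_1,D_2$ are nef, they generate $\mathrm{Pic}(X)$, and they are the two Schubert divisors, defined by meeting a fixed $F_{k+1}$, respectively a fixed $F_k^\perp$, nontrivially.

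\textbf{Necessity.} First I will compute the intersection matrix of the three surface Schubert classes $\sigma_{1,\ldots,k-2,k,k+1;}$, $\sigma_{1,\ldots,k-1;k-1}$, $\sigma_{1,\ldots,k-2,k;k}$ against the quadratic monomials $D_1^2$, $D_1D_2$, $D_2^2$. I expect this matrix to be unitriangular, or at least invertible, after the right ordering. The aim is to show that for a surface $Y$ with $[Y]=\nu$ we get, up to an invertible change of variables,
\[(D_1^2\cdot Y,\ D_1D_2\cdot Y,\ D_2^2\cdot Y)=(a,b,c).\]
I will check this by restricting to the sub-locus of planes containing a fixed isotropic $F_{k-2}$, where everything becomes concrete. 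If the matrix is not exactly the identity, the inequalities must be rewritten in the transformed coordinates. Granting the identification:
\begin{enumerate}[(i)]
\item Nonnegativity of $a,b,c$ follows because $\nu$ pairs nonnegatively with the dual Schubert classes, by Kleiman transversality on the homogeneous space $X$.
\item The inequality $b^2\ge ac$ is the Hodge index theorem (Khovanskii--Teissier) on a resolution of $Y$, applied to the nef classes $D_1|_Y$ and $D_2|_Y$:
\[(D_1\cdot D_2\cdot Y)^2\ \ge\ (D_1^2\cdot Y)(D_2^2\cdot Y).\]
\end{enumerate}
Since both sides scale together, this also covers realizability over $\Q$.

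\textbf{Sufficiency.} I will build surfaces inside the sub-locus of $W$ containing a fixed general isotropic $F_{k-2}$. Since $F_{k-2}^\perp/F_{k-2}$ is $6$-dimensional, this locus is $OG(2,6)$, the variety of lines on $Q^4\cong G(2,4)$. Lines on $G(2,4)$ are the incident pairs $p\in H$ with $p$ a point and $H$ a plane in $\P^3$. So the locus is the flag variety
\[Fl(1,3;4)\subset \P^3\times\P^{3*},\]
and $D_1,D_2$ restrict to the hyperplane classes of the two factors, possibly swapped. By the first step, it then suffices to find irreducible surfaces $Y\subset Fl(1,3;4)$ whose bidegrees $(H_1^2\cdot Y,\ H_1H_2\cdot Y,\ H_2^2\cdot Y)$ fill out, up to positive rational scaling, the region
\[a,b,c\ge 0,\qquad b^2\ge ac,\qquad a+c\ge b.\]

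\textbf{Constructions.} I would first produce the extremal surfaces:
\begin{enumerate}[(i)]
\item $\{p\}\times(\text{planes through } p)$;
\item $(\text{points of } H)\times\{H\}$;
\item the pairs $(p,H)$ with $p$ on a fixed line $L$ and $H\supset L$, a $\P^1\times\P^1$.
\end{enumerate}
For the interior I would use surfaces of the form
\[\{(p,H):\ p\in C,\ H\supset T\},\]
where $T$ moves in a family and the incidence condition is imposed. I would also try images of $\P^1\times\P^1$, or of a curve $\times$ curve, under maps $(f,g)$ into $\P^3\times\P^{3*}$ whose images land in the incidence correspondence. Composing with finite self-maps of $\P^1$ rescales the bidegrees, which should give the rational multiples. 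The expected shape of the resulting region is a ruled-surface family interpolating between (i), (ii) and (iii), and the constraint $a+c\ge b$ should be exactly what these constructions reach.

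\textbf{Main obstacle.} The hard step is this last one: producing irreducible surfaces realizing every ratio in the region, since a sum of surfaces is not irreducible. My first attempt would be to take, for a curve $\Gamma\subset\P^1\times\P^1$ of bidegree $(d,e)$ parametrizing pairs (point on $L$, pencil member through $L$), a surface swept by incident $(p,H)$ over $\Gamma$. I would thicken this by letting $L$ vary in a pencil, compute its bidegree, and check that as $(d,e)$ and the pencil degree vary, the normalized bidegrees are dense in the region. The intersection-matrix computation in the necessity step is the other place needing care, but it is routine.
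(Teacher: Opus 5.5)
Your necessity argument is the paper's: nef divisors $H_1,H_2$, the Hodge index inequality on the surface, and an intersection matrix that turns out to be a permutation matrix. Your surfaces (i)--(iii) are exactly the three Schubert surfaces, with bidegrees $(0,0,1)$, $(1,0,0)$, $(0,1,0)$. So no change of variables is needed beyond swapping $a$ and $c$ according to the parity of $k$. Your reduction of sufficiency to $OG(2,6)\cong Fl(1,3;4)$ by fixing $F_{k-2}$ also matches the paper's use of $\iota$.

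The gap is the sufficiency construction, which you leave open, and the mechanisms you suggest would not close it.
\begin{itemize}
\item Density of normalized bidegrees is not enough. Each rational ray of the cone must be hit exactly by a single irreducible surface, because sums of surfaces are not allowed.
\item Precomposing a parametrization with a finite self-map of $\P^1$ does not change the image, so it rescales nothing.
\item Your expectation that $a+c\ge b$ marks the boundary of what is constructible is wrong: your own surface (iii) has $(a,b,c)=(0,1,0)$. That inequality is not a geometric boundary. It comes from the construction the paper uses, which is the idea your proposal lacks.
\end{itemize}

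The paper's construction runs as follows. $Fl(1,3;4)$ is a divisor of class $h_1+h_2$ in $\P^3\times\P^{3*}$. By Huh's theorem \cite[Theorem 21]{H12}, the class $e_0h_1^3+e_1h_1^2h_2+e_2h_1h_2^2+e_3h_2^3$ is realizable over $\Q$ by an irreducible threefold $X$ whenever $(e_0,e_1,e_2,e_3)$ is nonnegative, log-concave and has no internal zeros. Move $X$ by a general element of $PGL_4\times PGL_4$ and apply Bertini. Then $X\cap Fl(1,3;4)$ is an irreducible surface of class $(e_0+e_1)h_1^3h_2+(e_1+e_2)h_1^2h_2^2+(e_2+e_3)h_1h_2^3$.

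It therefore suffices to solve $e_0+e_1=a$, $e_1+e_2=b$, $e_2+e_3=c$ with $e_i\ge 0$, $e_1^2\ge e_0e_2$ and $e_2^2\ge e_1e_3$. Given $b^2\ge ac$, \Cref{lem:abcd} shows this is possible exactly when $a+c\ge b$, by choosing $e_0$ in an explicit interval. This is where that hypothesis of the theorem comes from: $a+c=\sum e_i\ge e_1+e_2=b$.
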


We first need the following lemma.

\begin{lemma}\label[lemma]{lem:abcd}
Given $\alpha,\beta,\gamma\in \Q^{\ge 0}$ such that $\beta^2\ge \alpha\gamma$, 
there exist $a,b,c,d\in \Q^{\ge 0}$ such that 
\[a+b=\alpha,\, b+c=\beta,\, c+d=\gamma,\, b^2\ge ac,\, c^2\ge bd\]
if and only if $\alpha+\gamma\ge \beta$. 
\end{lemma}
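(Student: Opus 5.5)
The necessity direction is immediate. Summing the three linear equations gives $\alpha+\gamma=a+b+c+d\ge b+c=\beta$, since $a,d\ge 0$. So everything lies in the converse: assuming $\alpha+\gamma\ge\beta$ together with $\beta^2\ge\alpha\gamma$, I need to produce suitable $a,b,c,d$.

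My plan is to reduce everything to a single free parameter. Once $b$ is chosen, the equations force
\[a=\alpha-b,\qquad c=\beta-b,\qquad d=\gamma-\beta+b.\]
So the task is to show that the set of admissible $b$ is a nonempty interval with rational endpoints.

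First I treat the degenerate case $\beta=0$. Then $b=c=0$, $a=\alpha$, $d=\gamma$ satisfies $b^2\ge ac$ and $c^2\ge bd$ trivially. From now on I assume $\beta>0$, so that $\alpha+\beta>0$ and $\beta+\gamma>0$.

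Next I translate each condition into a bound on $b$:
\begin{itemize}[leftmargin=2em]
\item Nonnegativity of $a,b,c,d$ is equivalent to $\max(0,\beta-\gamma)\le b\le\min(\alpha,\beta)$.
\item After substituting, the $b^2$ terms cancel in $b^2\ge ac$, which becomes $(\alpha+\beta)b\ge\alpha\beta$, that is, $b\ge \alpha\beta/(\alpha+\beta)$.
\item Expanding $c^2\ge bd$ gives $\beta^2-2\beta b+b^2\ge b\gamma-\beta b+b^2$, that is, $b\le \beta^2/(\beta+\gamma)$.
\end{itemize}

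It remains to check that every lower bound is at most every upper bound. The lower bounds are $0$, $\beta-\gamma$ and $\alpha\beta/(\alpha+\beta)$; the upper bounds are $\alpha$, $\beta$ and $\beta^2/(\beta+\gamma)$.
\begin{itemize}[leftmargin=2em]
\item The inequality $\alpha\beta/(\alpha+\beta)\le\beta^2/(\beta+\gamma)$ is equivalent, after clearing denominators, to $\alpha\gamma\le\beta^2$. This is exactly where the hypothesis $\beta^2\ge\alpha\gamma$ enters.
\item The inequality $\beta-\gamma\le\alpha$ is exactly the hypothesis $\alpha+\gamma\ge\beta$.
\item The inequality $\beta-\gamma\le\beta^2/(\beta+\gamma)$ reduces to $\beta^2-\gamma^2\le\beta^2$, which always holds.
\item The remaining comparisons ($\alpha\beta/(\alpha+\beta)\le\alpha$, $\alpha\beta/(\alpha+\beta)\le\beta$, $0\le\beta^2/(\beta+\gamma)$, $\beta-\gamma\le\beta$, and $0\le\alpha,\beta$) are immediate from nonnegativity.
\end{itemize}

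Hence the interval is nonempty. All its endpoints are rational, so I can take $b$ to be the maximum of the lower bounds, which is rational. The corresponding $a,c,d$ are then rational and nonnegative, and satisfy both inequalities.

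I do not expect a real obstacle. The only points needing care are the cancellation of the $b^2$ terms, which makes both quadratic conditions linear in $b$, and the degenerate case $\beta=0$, where the denominators vanish.
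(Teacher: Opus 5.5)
Your proof is correct and is essentially the paper's argument: the paper parametrizes by $a$ and picks it in $\bigl[\max\bigl(0,\alpha-\tfrac{\beta^2}{\beta+\gamma}\bigr),\,\min\bigl(\tfrac{\alpha^2}{\alpha+\beta},\,\alpha+\gamma-\beta\bigr)\bigr]$. Under $b=\alpha-a$ this is exactly your interval for $b$, and nonemptiness rests on the same comparisons, namely $\beta^2\ge\alpha\gamma$ and $\beta^2-\gamma^2\le\beta^2$. Your separate treatment of $\beta=0$ is a small improvement, since the paper's formulas silently divide by $\alpha+\beta$ and $\beta+\gamma$, which can vanish.
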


\begin{proof}
Since \[\alpha+\gamma=a+b+c+d\ge \beta\] the condition is necessary. 
To show it is sufficient, we will show that we can pick 

\[a\in I=\left[ \max \left(0, \alpha-\frac{\beta^2}{\beta+\gamma}\right) , \min\left(\frac{\alpha^2}{\alpha+\beta}, \alpha+\gamma-\beta\right)  \right]\cap \Q.\]
That is, we will show that the set $I$ is nonempty, 
and that if we pick $a\in I$ then we get $a,b,c,d$ satisfying all the desired conditions. 

Since $\alpha+\gamma\ge \beta$, and $\alpha,\beta\ge 0$ 
we know that the upper bounds are nonnegative. 
It is straightforward algebra to verify
\[\beta^2 \ge \alpha\gamma\implies \frac{\alpha^2}{\alpha+\beta} \ge \alpha-\frac{\beta^2}{\beta+\gamma};\]
and
\[\beta^2 \ge \beta^2-\gamma^2\implies  \alpha-\beta+\gamma \ge \alpha - \frac{\beta^2}{\beta+\gamma}.\]

Hence $I$ is nonempty. 
Now let $a\in I$ be arbitrary. 
Then let $b=\alpha-a$.
Since $\alpha\ge \frac{\alpha^2}{\alpha+\beta}\ge a$ 
we know $b\ge 0$. 
Let $c=\beta-\alpha+a$. 
Since $a\ge \alpha-\frac{\beta^2}{\beta+\gamma}\ge  \alpha-\beta$ we have $c\ge 0$. 
Let $d=\alpha+\gamma-\beta-a$.  
Since $a\le \alpha+\gamma-\beta$, then $d\ge 0$. 
It is immediate that $a+b=\alpha$, $b+c=\beta$ and $c+d=\gamma$. 
We just need to check that $b^2\ge ac$ and $c^2\ge bd$. 
Again by straightforward computation we have 
\[\frac{\alpha^2}{\alpha+\beta} \ge a \implies b^2\ge ac;\]
and
\[a \ge \alpha-\frac{\beta^2}{\beta+\gamma}\implies c^2\ge bd.\]
\end{proof}

\begin{proof}[Proof of \Cref{thm:pic2}]
We first show that $b^2\ge ac$ is a necessary condition. 
First observe that there are two divisor classes in $OG(k,2(k+1))$: 
$H_1=\Sigma_{;k,k-2,\ldots,1,0}$ and $H_2=\Sigma_{k+1;k-2,\ldots,1,0}$. 
Note that $Q^{2k}\subset \P^{2k+1}$ has two families of projective $k$-planes.
$H_1$ parameterizes isotropic $(k-1)$-planes meeting a fixed $k$-plane in the 
first family, 
and $H_2$ parameterizes those meeting a fixed $k$-plane in the second family.
Planes from different families are not algebraically equivalent, 
so neither are $H_1$ and $H_2$. 
Hence $\rho(OG(k,2(k+1)))=2$.

We will compute the intersection matrix 
\[M=\begin{bmatrix}
    H_1^2[X] & H_1H_2[X]\\
    H_1H_2[X] & H_2^2[X]
\end{bmatrix}.\] 
The classes $H_1,H_2$ are nef, so by the Hodge Index Theorem, $\det M\le 0$. 

If $\Lambda$ and $\Lambda'$ are two projective $k$-planes in $Q^{2k}$, 
then $\dim \Lambda\cap \Lambda'\equiv k \pmod 2$ 
if and only if $\Lambda$ and $\Lambda'$
belong to the same family.
Hence the parity of $k$ dictates the intersection behavior. 
We will first assume that $k$ is even.
In this case, two general planes from the same family meet in a point, 
while general planes from different families are disjoint. 
If two planes meet in a line they must belong to different families.

We begin by computing $H_1 \cdot \sigma_{1,2,\ldots,k-1;k-1}$.
Taking a general representative of each Schubert class and intersecting them, 
we see that we are looking for the class of 
\[ \{[\Lambda]\in OG(k,2(k+1))\;|\; \Lambda \cap \P F_k^\perp\neq \varnothing, \P G_{k-1}\subset \Lambda\subset \P G_{k-1}^\perp \}.\]
Note that $\P F_k^\perp$ is an isotropic $k$-plane. 
Then $\P F_k^\perp \cap \P G_{k-1}^\perp = L\cong \P^1$ is an isotropic projective line. 
Hence we are looking for $\Lambda$ such that $\P G_{k-1}\subset \Lambda \subset \overline{\P G_{k-1}, L}$.
Note that $\overline{\P G_{k-1}, L}$ is itself an isotropic $k$-plane and meets 
$\P F_k^\perp$ in a line. 
Since $k$ is even, the two $k$ planes belong to different families. 
Hence $H_1\cdot \sigma_{1,2,\ldots,k-1;k-1} = \sigma_{1,2,\ldots,k-1,k+1;}$.

We immediately conclude that $H_1^2 \cdot \sigma_{1,2,\ldots,k-1;k-1} =0$ 
since by the above computation, if there was a $\Lambda$ in the intersection of 
a general representative of $H_1$ and 
$H_1\cdot \sigma_{1,2,\ldots,k-1;k-1}=\sigma_{1,2,\ldots,k-1,k+1;}$,
we would need $\Lambda$ to be contained in a $k$-plane from one family while 
meeting a general $k$-plane in the other. 
    
By symmetry we know $H_2 \cdot \sigma_{1,2,\ldots,k-1;k-1} =\sigma_{1,2,\ldots,k-1;k}$
and $H_2^2  \cdot \sigma_{1,2,\ldots,k-1;k-1} = 0$. 

We now compute $H_1H_2 \cdot \sigma_{1,2,\ldots,k-1;k-1} = H_2\cdot \sigma_{1,2,\ldots,k-1,k+1;}$. 
A representative would correspond to $\Lambda$ such that $\Lambda$ is contained in a $k$-plane 
from one component and meets another $k$-plane in the same component. 
These meet in a point $q$. Furthermore, $\Lambda$ must contain a $\P F_{k-1}$ so 
$\Lambda = \overline{\P F_{k-1},q}$. Hence $H_1H_2 \cdot \sigma_{1,2,\ldots,k-1;k-1}= 1$.

Next, consider $H_1\cdot \sigma_{1,2,\ldots,k-2,k,k+1;}$. 
This consists of $\Lambda$ contained in a $k$-plane from 
one ruling and meeting a $k$-plane from the other ruling. 
If these planes are general they are disjoint so 
$H_1\cdot \sigma_{1,2,\ldots,k-2,k,k+1;}=0$. 
Then $H_1^2\cdot \sigma_{1,2,\ldots,k-2,k,k+1;}$ $=$ $H_1H_2\cdot \sigma_{1,2,\ldots,k-2,k,k+1;}=0$.

Next, $H_2\cdot \sigma_{1,2,\ldots,k-2,k,k+1;}$. 
This consists of $\Lambda$ contained in a $k$-plane 
and meeting a $k$-plane from the same ruling. 
These $k$ planes meet in a point $q$. 
Then $\Lambda$ contains $\overline{\P F_{k-2},q}$, 
so $H_2\cdot \sigma_{1,2,\ldots,k-2,k,k+1;} = \sigma_{1,2,\ldots,k-2,k-1,k+1;}$.

Finally, $H_2^2\cdot \sigma_{1,2,\ldots,k-2,k,k+1;}=H_2\cdot \sigma_{1,2,\ldots,k-2,k-1,k+1;}$. 
This corresponds to $k$ planes containing a $\P F_{k-1}$ 
and the intersection point of the 
two relevant $k$-planes so $H_2^2\cdot \sigma_{1,2,\ldots,k-2,k,k+1;}=1$. 

By symmetry, $H_1H_2\cdot \sigma_{1,2,\ldots,k-2,k;k}=H_2^2\cdot \sigma_{1,2,\ldots,k-2,k;k}=0$
and $H_1^2\cdot \sigma_{1,2,\ldots,k-2,k;k}=1$. 

We obtain the intersection matrix 
\[M_{even} = \begin{bmatrix}
    c&b\\
    b&a
\end{bmatrix}.\]
Then we have $ac-b^2\le 0$, as desired. 

Now assume $k$ is odd. 
In this case, two general planes from the same family are disjoint, 
while general planes from different families meet in a point.  
If two planes meet in a line they must belong to the same family. 
We compute the first entry of $M_{odd}$, and trust that the reader can see how the 
rest of the computation proceeds analogously to the one above, with the only 
difference being in the behavior of the intersection of isotropic $k$-planes. 

The first entry is $H_1^2\cdot [X]$.
We start with $H_1\cdot \sigma_{1,2,\ldots,k-1;k-1}$. 
We need to compute the class of a representative of the intersection of two general representatives:
\[\{[\Lambda]\in OG(k,2(k+1)) \;|\; \Lambda\cap \P F_k^\perp\neq \varnothing, \,\P G_{k-1}\subset \Lambda \subset \P G_{k-1}^\perp \}.\]
Again, $\P F_k^\perp$ is an isotropic $k$ plane, 
and as before $\P F_k^\perp \cap \P G_{k-1}^\perp = L \cong \P^1$, a projective line. 
Hence we get 
$\{[\Lambda]\;|\; \P G_{k-1}\subset \Lambda\subset \overline{\P G_{k-1},L} \}$. 
Now $\overline{\P G_{k-1},L}$ is an isotropic $k$-plane meeting $\P F_k^\perp$ in a line. 
Because $k$ is odd, they belong to the same family. 
Hence $H_1\cdot \sigma_{1,2,\ldots,k-1;k-1} = \sigma_{1,2,\ldots,k-1;k}$.
Then $H_1^2\cdot \sigma_{1,2,\ldots,k-1;k-1}=H_1\cdot \sigma_{1,2,\ldots,k-1;k} = 0$
since $k$-planes from the same family are disjoint. 

Next, consider $H_1\cdot \sigma_{1,2,\ldots,k-2,k,k+1;}$. 
We need to compute the class of 
\[\{[\Lambda]\in OG(k,2(k+1)) \;|\; \Lambda\cap \P F_k^\perp\neq \varnothing, \P G_{k-2}\subset \Lambda\subset \P G_{k+1}\}.\]
$\P F_k^\perp$ and $\P G_{k+1}$ are isotropic $k$-planes from different families so they meet in a point. 
Hence  $H_1\cdot \sigma_{1,2,\ldots,k-2,k,k+1;}=\sigma_{1,2,\ldots,k-1,k+1;}$
and so $H_1^2\cdot \sigma_{1,2,\ldots,k-2,k,k+1;}=H_1\cdot \sigma_{1,2,\ldots,k-1,k+1;} =1$. 

Finally, consider $H_1\cdot \sigma_{1,2,\ldots,k-2,k;k}$. 
We immediately see that this intersection is zero since we require $\Lambda$ to be contained 
in a $k$ plane from one family and meet a different $k$ plane from the same family. 

We conclude that $H_1^2[X]=a$. 
After computing the remaining intersections we get 
\[M_{odd}=\begin{bmatrix}
    a&b\\
    b&c
\end{bmatrix}.\] 
This has the same determinant and hence the same relation as before. 

We now show that $b^2\ge ac$ together with $a+c\ge b$ is a sufficient condition, starting with $OG(2,6)$.
We know that $OG(2,6)\cong F(1,3;4)$. 
Viewing $F(1,3;4)$ as the universal plane, one has $[F(1,3;4)]=h_1+h_2 \in H^2(\P^3\times \P^{3*})$, where $h_1$ is the pullback of the hyperplane class in $\P^3$ and $h_2$ is the pullback of the hyperplane class in $P^{3*}$.
Now consider an irreducible threefold $X\subset \P^3\times \P^{3*}$. 
Then $[X]=ah_1^3+bh_1^2h_2+ch_1h_2^2+dh_2^3$.
By \cite[Theorem 21]{H12} this class is realizable over $\Q$ if and only if $b,c\neq 0$, $b^2\ge ac$ and $c^2\ge bd$. 
Now consider 
\[[X][F(1,3;4)] = (a+b)h_1^3h_2 + (b+c)h_1^2h_2^2 + (c+d)h_1h_2^3.\]
By Bertini's theorem, the class is realizable as an irreducible surface $Y\subset F(1,3;4)\cong OG(2,6)$.
Given in terms of a basis of $H^*(OG(2,6))$ we have  $[Y]=(a+b)\sigma_{23;}+(b+c)\sigma_{1;1}+(c+d)\sigma_{2;2}$.
Then \Cref{lem:abcd} tells us that given a class $\alpha\sigma_{23;}+\beta\sigma_{1;1}+\gamma\sigma_{2;2}\in H^{2N-4}(OG(2,6),\Q)$, if $\beta^2\ge \alpha\gamma$ and $\alpha+\gamma\ge \beta$, we can realize it as a hyperplane section of an irreducible threefold in $\P^3\times \P^{3*}$.

Now that we have the result for $OG(2,6)$, it holds inductively for $OG(k,2(k+1))$. 
We know we have a map $\iota:OG(k,n)\to OG(k+1,n+2)$ as defined in \Cref{sec:maps}. 
Then if we can realize $\nu$ as an irreducible subvariety $X\subset OG(k,2(k+1))$, $\iota(X)$ is an irreducible subvariety in $OG(k+1,2(k+2))$ of class $\iota_*\nu$, which satisfies our assumptions.
\end{proof}


\begin{theorem}\label{thm:surfaces-OG2n}
Let $\nu=a\sigma_{1,4;} + b\sigma_{2,3;}$ be a non-zero cohomology class in $H^{2N-4}(OG(2,n),\Z)$, with $n\ge 9$. 
Then $\nu$ is realizable over $\Z$ if and only if $a>0$ and $b\ge 0$ or $(a,b)=(0,1)$.
\end{theorem}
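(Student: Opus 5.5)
The plan is to reduce to the Grassmannian via \Cref{prop:iff} and then invoke the classification of integral surface classes in $G(2,n)$ from \cite{CR26}.

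First, the hypotheses. Here $k=2$ and $r=2$, so \Cref{prop:iff} requires $k+r=4<\floor{n/2}$, i.e.\ $n\ge 10$. By \Cref{lem:grass-type}, with $2(k+r)=8<n$, every class in $H^{2N-4}(OG(2,n),\Z)$ is a combination of Grassmannian type classes. The only Grassmannian type surface classes are $\sigma_{1,4;}$ and $\sigma_{2,3;}$, so $\nu$ has the stated form.

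The borderline case $n=9$ needs care, since $\floor{9/2}=4$ and $k+r=4$ is not strictly less. Here I would argue directly with the chain of maps
\[G(2,4)\to OG(2,9)\to G(2,9).\]
Sufficiency comes from realizing classes already in $G(2,4)$:
\begin{itemize}
\item $\sigma_{1,4}$ is the lines through a point;
\item $\sigma_{2,3}$ is the lines in a plane;
\item the classes $a\sigma_{1,4}+b\sigma_{2,3}$ with $a>0$, $b\ge 0$ arise from the surface classes realizable in $G(2,4)$, e.g.\ complete intersections with a fixed $\P^3$ or cones, as in \cite{CR26}.
\end{itemize}
If the $G(2,4)$ realizations do not cover every such pair, I would instead use a ruled-surface construction inside a fixed isotropic $F_4$. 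Take the lines joining corresponding points of two curves in $\P F_4\cong\P^3$: joining a degree $d$ curve to a point gives class $d\,\sigma_{1,4}$, and general ruled surfaces of degree $e$ add $\sigma_{2,3}$ terms. Any such surface lies in $G(2,F_4)\subset OG(2,n)$ automatically, because all lines in $\P F_4$ are isotropic. This handles every $n\ge 9$ uniformly and makes sufficiency independent of \Cref{prop:iff}.

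For necessity, pushing forward along $OG(2,n)\to G(2,n)$ preserves the class $a\sigma_{1,4}+b\sigma_{2,3}$ and irreducibility. So it suffices to know that in $G(2,n)$ the integral surface classes are exactly those with $a>0$, $b\ge 0$, or $(a,b)=(0,1)$. The reason is the classical fact that $b\,\sigma_{2,3}$ with $b>1$ is not irreducibly realizable: a surface of class $b\,\sigma_{2,3}$ has $a=0$, so the lines of the family are pairwise incident and not all through one point. Hence they all lie in a single plane, which forces $b=1$. Nonnegativity comes from intersecting with the dual Schubert classes $\sigma_{n-3,n-2}$-type and $\sigma_{n-4,n}$-type, whose general translates meet $Y$ properly by Kleiman.

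The main obstacle is the sufficiency step of producing every pair $(a,b)$ with $a>0$, $b\ge 0$ as an irreducible surface. I would first try to cite the $G(2,n)$ surface classification in \cite{CR26} and transport it via $G(2,4)\subset OG(2,n)$. If that citation is only stated for larger $n$, I would fall back on the explicit ruled-surface construction inside $\P F_4$. In that construction one shows by a degeneration argument that joining a rational curve of degree $p$ to one of degree $q$ via an isomorphism gives class $\min$-adjusted combinations filling out all $a>0$, $b\ge0$.
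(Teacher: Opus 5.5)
Your treatment of $n\ge 10$ via \Cref{prop:iff} and \cite[Theorem 5.1]{CR26} matches the paper. So does necessity at $n=9$: you push forward to $G(2,9)$, where part (1) of that theorem applies since $9>4$. The gap is sufficiency at $n=9$ for the classes $a\sigma_{1,4;}$ with $a\ge 2$.

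Your claim that every class with $a>0$, $b\ge 0$ comes from $G(2,4)$ is false. By part (2) of \cite[Theorem 5.1]{CR26}, a class $a\sigma_{1,4}+b\sigma_{2,3}$ in $G(2,4)$ is realizable only if $a,b>0$ or $(a,b)\in\{(1,0),(0,1)\}$. So $a\sigma_{1,4}$ with $a\ge 2$ is not realizable there.

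Your fallback inside a fixed isotropic $F_4$ cannot repair this. We have $G(2,F_4)\cong G(2,4)$, and the inclusion sends $\sigma_{1,4},\sigma_{2,3}$ to the independent classes $\sigma_{1,4;},\sigma_{2,3;}$. Hence any surface in $G(2,F_4)$ of class $a\sigma_{1,4;}$ would be a surface of class $a\sigma_{1,4}$ in $G(2,4)$. The constructions you describe are also off by one dimension. Joining a point to a curve, or corresponding points of two curves, gives a one-parameter family of lines: a ruled surface in $\P^3$, but a \emph{curve} in the Grassmannian. Since $\floor{9/2}=4$, there is no isotropic $F_5$, hence no $G(2,5)\subset OG(2,9)$ from which to borrow these classes. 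So the claim that this ``handles every $n\ge 9$ uniformly'' fails exactly in the case that needed care.

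The missing idea is to leave the maximal isotropic subspace and use the quadric itself, as the paper does. Fix $p\in Q\subset\P^8$. The lines of $Q$ through $p$ are parametrized by the smooth quadric $5$-fold $Q'$ at the base of the cone $T_pQ\cap Q$. This gives the map $\iota:OG(1,7)\to OG(2,9)$ of \Cref{sec:maps}, with $\iota_*\sigma_{3;}=\sigma_{1,4;}$.

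Take an irreducible surface $X\subset Q'$ of degree $a$, for example a cone over a degree-$a$ curve on a quadric surface in $Q'$. Then $[X]=a\sigma_{3;}$, so $\iota(X)$ is an irreducible surface in $OG(2,9)$ of class $a\sigma_{1,4;}$. The lines of $\iota(X)$ sweep out the $3$-dimensional cone over $X$ with vertex $p$. For $a\ge 2$ this cone is not an isotropic $\P^3$, consistent with the non-realizability in $G(2,4)$.
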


First we recall the classification of realizable surfaces in the Grassmannian 
from \cite{CR26}.
The statement has been lightly edited to match our notation.

\begin{utheorem}[{c.f. \cite[Theorem 5.1]{CR26}}]
Let $2\le k\le n-k$ and let $\nu\in H^{2N-4}(G(k,n),Z)$ be a 
non-zero, integral cohomology class. 
When $k>2$, every effective class is realizable as an irreducible surface.
When $k=2$ we have $\nu = a\sigma_{1,4} + b\sigma_{2,3}$. 
\begin{enumerate}
    \item When $n>4$, the class $\nu$ can be represented by an
    irreducible surface if and only if $a>0$ and $b\ge 0$ or $(a,b)=(0,1).$
    \item When $(k,n)=(2,4)$, the class $\nu$ can be represented by an irreducible 
    surface if and only if $a,b>0$ or $(a,b)=(1,0)$ or $(a,b)=(0,1)$.
\end{enumerate}
\end{utheorem}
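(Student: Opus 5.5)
The plan is to work in the ordinary Schubert basis and handle $k=2$ and $k>2$ separately. In our indexing, the $2$-dimensional Schubert varieties in $G(k,n)$ are $\Sigma_{1,\ldots,k-1,k+2}$ and $\Sigma_{1,\ldots,k-2,k,k+1}$, corresponding to the partitions $(2)$ and $(1,1)$. So every surface class is $\nu=a\sigma_{1,\ldots,k-1,k+2}+b\sigma_{1,\ldots,k-2,k,k+1}$, and effectivity forces $a,b\ge 0$. For $k=2$ these are $\sigma_{1,4}$, the lines through a point $p$ lying in a $\P^3$, and $\sigma_{2,3}$, the lines in a $\P^2$. The coefficient $a$ (respectively $b$) is recovered by pairing with the dual Schubert class. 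For $k=2$ this gives the following:
\begin{enumerate}
\item $b$ is the number of lines of the surface $S$ meeting a general $\P^{n-4}$ in a hyperplane;
\item $a$ is the number of lines of $S$ meeting a general $\P^{n-3}$ in a prescribed way.
\end{enumerate}
I would make these enumerative interpretations precise first, since both necessity arguments rest on them.

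For necessity when $k=2$, suppose $a=0$. The vanishing of the pairing with the dual of $\sigma_{1,4}$ means the lines of $S$ do not meet a general codimension-$3$ linear space. Hence the variety swept out by these lines has dimension at most $2$. It is then a surface containing a $2$-parameter family of lines, and so it is a plane. Therefore $S=G(2,3)$ and $(a,b)=(0,1)$; this is the rigidity of $\sigma_{2,3}$.

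When $(k,n)=(2,4)$, apply the duality $G(2,4)\cong G(2,4)$ induced by $\P^3\leftrightarrow\P^{3*}$. It exchanges $\sigma_{1,4}$ and $\sigma_{2,3}$, so the same argument shows that $b=0$ forces $a=1$. For $n>4$ the case $b=0$ is not restricted, because lines through a fixed point $p$ form a $\P^{n-2}$ and not a $\P^2$.

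For sufficiency when $k=2$, I would use explicit constructions.
\begin{enumerate}
\item The class $a\sigma_{1,4}$ with $n>4$: take the cone of lines through $p$ over an irreducible surface of degree $a$ in a $\P^3\subset\P^{n-2}$. Under the identification of lines through $p$ with $\P^{n-2}$, this gives exactly $a\sigma_{1,4}$.
\item Mixed classes with $a,b>0$: take the lines joining corresponding points of two curves. Choose curves $C_1\subset\Lambda_1$ and $C_2\subset\Lambda_2$ in disjoint linear spaces, and for $C_1\cong C_2$ use graphs of correspondences of varying bidegree; alternatively take $\P^1$-families of lines sweeping scrolls. By computing the two pairings, I would check that every pair $(a,b)$ with $a,b\ge1$ occurs.
\item Remaining cases: take hyperplane sections (Bertini) of irreducible threefolds in Schubert varieties, which gives flexibility in the pairs $(a,b)$.
\end{enumerate}
In $G(2,4)$ I need $a,b>0$, and these constructions fit inside $\P^3$.

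For $k>2$ the claim is that every pair $(a,b)\ne(0,0)$ with $a,b\ge0$ is realizable. I would reduce to small Grassmannians by the standard maps $W\mapsto W\oplus U$, where $U$ is a fixed subspace and $W$ lies in a complementary space, combined with the inclusions $G(k,m)\to G(k,n)$ from \Cref{sec:maps}. These maps push Schubert classes to Schubert classes with the same partition. The $G(2,\cdot)$ constructions, pushed forward in this way, realize every class with $a>0$. The only problem is the multiples $b\sigma_{(1,1)}$ with $a=0$, because in $G(2,\cdot)$ these are rigid.

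Since $n-k\ge k\ge 3$, I would use the duality $G(k,n)\cong G(n-k,n)$, which exchanges the partitions $(2)$ and $(1,1)$. It turns the needed classes $b\sigma_{(1,1)}$ into $b\sigma_{(2)}$ in $G(n-k,n)$. These come from cones of the form $\{\,V\supset U_{n-k-1},\ V\subset U_{n-k+2}\,\}$ over surfaces of degree $b$ in a $\P^3$, which is the analogue of the first construction above. It remains to check that the containment dimensions are compatible, which needs $k\ge3$. Verifying this dual construction, and more generally that the mixed constructions cover all $(a,b)$ rather than only a cone of ratios, is the step I expect to be the main obstacle. If the scroll families miss some ratios, I would fill them using Bertini sections of irreducible threefolds whose classes are sums of known ones.
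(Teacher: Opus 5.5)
The paper does not prove this statement; it quotes it from \cite[Theorem 5.1]{CR26}. Your argument therefore has to stand on its own, and it has one genuine gap.

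\textbf{What holds up.} Necessity is fine. If $a=0$, the lines of $S$ miss a general codimension-$3$ linear space, so they sweep out a surface carrying a two-parameter family of lines, i.e.\ a plane, and $S=\Sigma_{2,3}$. The duality of $\P^3$ then handles $b=0$ in $G(2,4)$. The cones over degree-$a$ surfaces give $a\sigma_{1,4}$ for $n>4$. For $k\geq 3$, the classes $b\sigma_{1,\ldots,k-2,k,k+1}$ come from degree-$b$ surfaces in $\{F_{k-3}\subset W\subset F_{k+1}\}\cong\P^{3*}$. In your dual description you need $U_{n-k+3}$, not $U_{n-k+2}$, to get a $\P^3$, and that is exactly where $k\geq 3$ is used. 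Your enumerative dictionary is also off: $a$ counts lines of $S$ meeting a general $\P^{n-4}$, and $b$ counts lines contained in a general hyperplane. Your necessity argument does use the correct version.

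\textbf{The gap: sufficiency for $a,b\geq 1$.} This is needed in (1) and (2), and, via pushforward, for every mixed class when $k>2$. None of your constructions reaches these classes.
\begin{enumerate}
\item Lines joining corresponding points of two curves under a correspondence, and rulings of scrolls, are one-parameter families. They are curves in $G(2,n)$, not surfaces.
\item Taking all lines joining $C_1$ and $C_2$ does give a surface, but its order and class both equal $d_1d_2$. You only get the diagonal $a=b$.
\item The Bertini fallback cannot fill the rest. In $G(2,4)$ every threefold has class $m\sigma_1$, so its sections have class $m(\sigma_{1,4}+\sigma_{2,3})$. In $G(2,n)$, a section of a threefold of class $\alpha\sigma_{1,5}+\beta\sigma_{2,4}$ has class $(\alpha+\beta)\sigma_{1,4}+\beta\sigma_{2,3}$, so $a<b$ never occurs.
\item Producing an irreducible threefold whose class is a sum of known classes is the same problem one dimension up.
\end{enumerate}

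\textbf{A construction that works.} Take an irreducible curve $C\subset\P^3$ of degree $d$ meeting a line $\ell$ transversally in $m$ points, with $0\leq m\leq d-1$; for instance, a curve of bidegree $(m,d-m)$ on a smooth quadric, with $\ell$ a ruling line. The locus of lines meeting both $C$ and $\ell$ has class $d\sigma_1^2=d(\sigma_{1,4}+\sigma_{2,3})$. It is the union of two kinds of components:
\begin{enumerate}
\item the $m$ planes of lines through the points of $C\cap\ell$, each reduced and of class $\sigma_{1,4}$;
\item the irreducible surface $S$ of lines joining distinct points of $C$ and $\ell$.
\end{enumerate}
Hence $[S]=(d-m)\sigma_{1,4}+d\sigma_{2,3}$. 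As a check, the plane spanned by a general point and $\ell$ meets $C$ in $d-m$ points off $\ell$, which gives the order.

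With the duality of $\P^3$, this produces every $(a,b)$ with $a,b\geq 1$ in $G(2,4)$. Pushing forward along $G(2,4)\subset G(2,n)$ and $W\mapsto W\oplus U$ then completes both the $k=2$ and $k>2$ cases.
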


\begin{proof}[Proof of \Cref{thm:surfaces-OG2n}]
When $n\ge 10$ this follows from \Cref{prop:iff} and \cite[Theorem 5.1]{CR26}. 
When $n=9$, the condition $k+r<\floor{n/2}$ fails. 
We know that classes with $a,b>0$, $(a,b)=(1,0)$ or $(a,b)=(0,1)$ are realizable in $G(2,4)$ and hence in $OG(2,9)$. 
The class $b\sigma_{2,3}$ is not realizable in $G(2,9)$ for $b>1$, hence not realizable in $OG(2,9)$.
It remains to show that $a\sigma_{1,4}$ is realizable in $OG(2,9)$ for all $a\ge 1$.
$OG(2,9)$ is the parameter space of lines on a quadric 7-fold $Q\subset \P^8$. 
We take a tangent hyperplane section and get a conic with vertex the point $p$ over $Q'$, a smooth quadric 5-fold sitting in $Q$. 
Observe that for all $a\ge 1$ there is an irreducible degree $a$ surface $X\subset Q'$.
One way to get this is to take an irreducible degree $a$ curve in a quadric surface in $Q'$ and take a cone in $Q'$.
Then $X\subset Q'\cong OG(1,7)$ and $[X]=a\sigma_{3;}$. 
Hence $\iota(X)\subset OG(2,9)$ and $[\iota(X)]=\iota_*[X]=a\sigma_{14;}$, where again $\iota$ is the map defined in \Cref{sec:notation}.
Then $\nu$ is realizable in $OG(2,9)$ if and only if it is realizable in $G(2,9)$.
\end{proof}


\subsection{Codimension 2}

We recall the relevant result from \cite{CR26}, 
adjusting the notation. 

\begin{utheorem}[{c.f. \cite[Theorem 5.2]{CR26}}]
Let $2\le k\le n-k$ and let $\nu\in H^4(G(k,n),\Z)$ be a non-zero,
integral cohomology class. 
When $k>2$, every effective class is realizable. 
When $k=2$ we have $\nu = a\sigma_{n-3,n}+b\sigma_{n-2,n-1}$.
In this case, if additionally $n>4$, 
the class $\nu\in H^4(G(2,n),\Z)$ is realizable over $\Z$ 
if and only if $a>0$ and $b\ge 0$ or $(a,b)=(0,1).$
\end{utheorem}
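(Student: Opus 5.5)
Since this is \cite[Theorem 5.2]{CR26}, I would reconstruct the argument in three parts, working with the two codimension-two Schubert classes of $G(k,n)$. In partition notation these are $\sigma_{(2)}$ and $\sigma_{(1,1)}$, and one always has $\sigma_{(1)}^2=\sigma_{(2)}+\sigma_{(1,1)}$. For $k=2$, the class $\sigma_{n-2,n-1}$ is represented by the sub-Grassmannian $G(2,n-1)$ of lines in a fixed hyperplane, and $\sigma_{n-3,n}$ by the lines meeting a fixed codimension-$3$ subspace. Write $\nu=a\sigma_{n-3,n}+b\sigma_{n-2,n-1}$.

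\textbf{Effectivity.} Let $Y$ be an irreducible codimension-two subvariety with $[Y]=\nu$. By Kleiman transversality, intersecting $Y$ with general translates of the dual surface classes gives nonnegative numbers. These intersection numbers are exactly the coefficients of $\nu$ in the Schubert basis, so $a,b\ge 0$. The same argument gives effectivity for general $k$.

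\textbf{Constructions.} For $a>0$, I would realize the classes by the following steps.
\begin{enumerate}
\item Take a general bundle map $\mathcal O^{m}\to Q$, where $Q$ is the tautological quotient bundle, or dually a map out of $S^*$. For the appropriate $m$ its degeneracy locus is irreducible of codimension two, and its class is a Schubert class by Porteous' formula.
\item Take a complete intersection of two general members of $|d\sigma_1|$ and $|e\sigma_1|$. By Bertini this is irreducible, and its class is $de(\sigma_{(2)}+\sigma_{(1,1)})$.
\item More flexibly, take the preimage of an irreducible codimension-two subvariety of $\mathbb P^{n-1}$ under the correspondence of lines meeting it. This produces $a\sigma_{n-3,n}$ for every $a\ge1$.
\item Combine these by degenerations, or by taking the zero locus of a general section of a suitable globally generated rank-$2$ bundle built from $S^*$, $Q$ and $\mathcal O(1)$ twists. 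Its second Chern class $c_2$ runs over all $a\sigma_{(2)}+b\sigma_{(1,1)}$ with $a>0$, $b\ge0$, and Bertini for globally generated bundles gives irreducibility.
\end{enumerate}
For $k>2$ the two classes play symmetric roles under the duality $G(k,n)\cong G(n-k,n)$ combined with the above. So I expect every effective class, including $b\sigma_{(1,1)}$ with $b>1$ (for instance lines of degree-$b$ type via a degeneracy locus of $S^*$), to be realizable.

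\textbf{Exclusion for $k=2$.} The main obstacle is showing that $b\sigma_{n-2,n-1}$ with $b>1$ is not realizable when $k=2$, $n>4$. I would invoke the multi-rigidity results for Schubert classes in Hermitian symmetric spaces, \cite{CR13, RT12}. The sub-Grassmannian $G(2,n-1)\subset G(2,n)$ is a maximal linear space type Schubert variety, and its multiples are multi-rigid. Hence any irreducible representative of $b\sigma_{n-2,n-1}$ must be a single Schubert variety, forcing $b=1$.

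If I wanted a direct argument instead, I would restrict the family of lines parametrized by $Y$ to a general $\mathbb P^3$. There the class becomes $b$ times the class of lines in a plane inside $G(2,4)$. I would then argue that an irreducible such family must consist of the lines of one plane, using that two general lines of $Y$ meet.
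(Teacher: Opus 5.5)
The paper does not prove this statement; it quotes it from \cite{CR26}. So I am judging your proposal against what a complete argument needs. Your necessity half is sound. Effectivity via Kleiman is fine, and $b\sigma_{(1,1)}$ with $b>1$ is indeed excluded by Schur rigidity of $\sigma_{(1,1)}$. However, the rigidity results apply because $G(2,n-1)$ is a smooth Schubert variety, not because it is a linear space; for $n\ge 5$ it is not linear.

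You can make the exclusion self-contained exactly as the paper does for $b\sigma_{;2,1}$ in the proof of \Cref{thm:codim2}:
\begin{itemize}
\item Since $\sigma_{(1,1)}\cdot\sigma_{(n-2)}=0$, no line of $Y$ passes through a general point. So the lines of $Y$ sweep out some $Z\neq\P^{n-1}$.
\item Then $2n-6=\dim Y\le\dim F_1(Z)\le 2\dim Z-2$ forces $Z$ to be a hyperplane.
\item Hence $Y=G(2,n-1)$ and $b=1$.
\end{itemize}
Drop your fallback ``direct argument''. It needs the restriction of $Y$ to a general $\P^3$ to be irreducible, which is not automatic. It also uses that two general lines of $Y$ meet, which is false for $Y=G(2,n-1)$ itself.

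The genuine gap is the existence half, which is most of the theorem. Steps (1)--(3) yield only $\sigma_{(2)}$, $\sigma_{(1,1)}$, $de(\sigma_{(2)}+\sigma_{(1,1)})$ and $a\sigma_{(2)}$. Step (3) is also off by one: lines meeting a codimension-two subvariety of degree $a$ form a divisor of class $a\sigma_{(1)}$, so you need an irreducible codimension-three subvariety. A class such as $\sigma_{(2)}+2\sigma_{(1,1)}$ is never reached, so everything else rests on step (4), which is only asserted.
\begin{itemize}
\item ``Combine by degenerations'' cannot be a general principle. The class $\sigma_{(1,1)}+\sigma_{(1,1)}$ is a sum of realizable classes, yet your own exclusion shows it is not realizable. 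Any smoothing argument would have to exploit the $\sigma_{(2)}$ summand specifically, and you do not supply one.
\item No globally generated rank-$2$ bundle with prescribed $c_2$ is exhibited. The natural ones give only $c_2(\mathcal O(d)\oplus\mathcal O(e))=de(\sigma_{(2)}+\sigma_{(1,1)})$ and $c_2(S^*(d))=(d^2+d)\sigma_{(2)}+(d^2+d+1)\sigma_{(1,1)}$. Meanwhile $Q$ has rank $n-2>2$.
\item For $k>2$ you stop at ``I expect''. A degeneracy locus of $\mathcal O^{k-1}\to S^*$ has class $\sigma_{(1,1)}$, not $b\sigma_{(1,1)}$. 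Multiples $b\sigma_{(1,1)}$ do follow from the corrected step (3) on $G(n-k,n)$ via duality when $k\ge 3$, but the mixed classes still need a construction.
\end{itemize}

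For $k=2$, here is one way to close the gap. Projection from a general $\P^{n-5}$ gives a morphism from the lines missing the center (whose complement has codimension $3$) onto $G(2,4)$, with affine-space fibres. It pulls back lines through a point and lines in a plane to $\Sigma_{(2)}$ and $\Sigma_{(1,1)}$. Closures of preimages of irreducible surfaces in $G(2,4)$ then realize every $a\sigma_{(2)}+b\sigma_{(1,1)}$ with $a,b>0$, using the $G(2,4)$ case of \cite[Theorem 5.1]{CR26}. The corrected step (3) covers $b=0$.
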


\begin{theorem}\label{thm:codim2}
Let $\nu =a\sigma_{;k+1,k-2,k-3,\ldots,1,0}+b\sigma_{;k,k-1,k-3,\ldots,1,0}$ be a nonzero integral cohomology class in $H^4(OG(k,n),\Z)$.
If $n>2k+4$, then:
\begin{enumerate}
    \item When $k>2$, $\nu$ is realizable over $\Z$ if and only if $a,b\ge 0$.
    \item When $k=2$, $\nu$ is realizable over $\Z$ if and only if $a>0$, $b\ge 0$ or $(a,b)=(0,1)$.
\end{enumerate}
\end{theorem}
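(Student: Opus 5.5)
The plan is to reduce to the Grassmannian statement \cite[Theorem 5.2]{CR26} by combining two things. The first is pullback along the inclusion $i:OG(k,n)\to G(k,n)$. The second is the dual form of \Cref{lem:grass-type}: since $n>2k+4$, i.e.\ $2(k+2)<n$, every class in $H^4(OG(k,n),\Z)$ is a combination of quadric type classes. In codimension two these are exactly $\sigma_{;k+1,k-2,\ldots,0}$ and $\sigma_{;k,k-1,k-3,\ldots,0}$, which are the two classes appearing in the statement.

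\textbf{Step 1: identify the restricted classes.} I will check that
\[
i^*\sigma_{n-k-1,n-k+2,\ldots,n}=\sigma_{;k+1,k-2,\ldots,1,0},
\qquad
i^*\sigma_{n-k-1,n-k,n-k+3,\ldots,n}=\sigma_{;k,k-1,k-3,\ldots,1,0}
\]
up to the standard identification. The codimension-two Schubert varieties of $G(k,n)$ are the loci of $W$ meeting a fixed $(n-k-1)$-plane, or contained in a $(n-k+?)$-dimensional condition, written in the paper's indexing. For an isotropic flag with $F_j^\perp$ of dimension $n-j$, these restrict to the loci $\dim(W\cap F^\perp_{k+1})\ge 1$, respectively $\dim(W\cap F_{k-1}^\perp)\ge 2$. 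Both intersections are dimensionally transverse by Kleiman transversality applied to a general flag in $G(k,n)$, or more simply because the codimension in $OG$ is again $2$. So $i^*$ is an isomorphism $H^4(G(k,n))\to H^4(OG(k,n))$ sending the Schubert basis to the quadric type basis. I expect one has to be careful that no multiplicity $2$ appears, as happens for odd-dimensional quadrics in low degree. Because $n>2k+4$ keeps the flag elements away from the middle dimension, I expect this to be fine.

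\textbf{Step 2: sufficiency.} Given a realizable class $\mu=a\sigma+b\sigma'$ in $G(k,n)$, take an irreducible $Y\subset G(k,n)$ of codimension $2$ with $[Y]=\mu$. Translate $Y$ by a general element of $GL(n)$. By Kleiman, $Y\cap OG(k,n)$ then has codimension $2$ and class $i^*\mu$. The main obstacle is irreducibility. For this I would use a Bertini/Fulton–Hansen style connectedness statement: $OG(k,n)$ is cut out in $G(k,n)$ by the section of $\mathrm{Sym}^2 S^*$, which is globally generated and ample enough, and a general translate meets the codimension-two variety $Y$ in an irreducible subvariety when $\dim Y$ is large (here $\dim Y\ge \dim OG+2$). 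Alternatively, avoid the issue by constructing explicit irreducible representatives directly in $OG$:
\begin{itemize}
\item for $k>2$, take $a\sigma+b\sigma'$ as the restriction of a general codimension-two complete intersection of divisors together with a Schubert locus, in the spirit of the proof in \cite{CR26};
\item for $k=2$, realize $\sigma_{;3,0}$ with multiplicity $a$ via lines meeting an irreducible degree-$a$ codimension-two subvariety of the quadric $Q$ (these exist, e.g.\ cones over curves), and add $b\sigma_{;2,1}$ using a divisor section;
\item the class $(0,1)$ is the Schubert variety itself.
\end{itemize}

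\textbf{Step 3: necessity.} Nonnegativity follows from \cite[Proposition 2.20]{C18}. For $k=2$ it remains to exclude $b\sigma_{;2,1}$ with $b>1$, and $a=0$ in general. Here I would use duality: intersecting with the dual Grassmannian-type surface classes $\sigma_{1,4;},\sigma_{2,3;}$ pairs $H^4$ with $H^{2N-4}$. The rigidity argument for $\sigma_{2,3}$ in $G(2,n)$ (lines in a fixed plane) has an analogue for $\sigma_{;2,1}$: a subvariety of that class is swept by lines whose points span a linear space, forcing it to be a Schubert variety. Alternatively, I would push forward along $i_*$ to $G(2,n)$ and use the $G(2,n)$ obstruction. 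I expect this rigidity/pushforward step to be the delicate part, alongside irreducibility in Step 2.
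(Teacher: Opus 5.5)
Your outline matches the paper's: the quadric-type basis from the dual of \Cref{lem:grass-type}, identification via $i^*$, restriction of general irreducible representatives from $G(k,n)$, and nonnegativity from effectivity. However, both steps you flag as delicate are left open.

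First, two smaller points in Step 1. Your second class $\sigma_{n-k-1,n-k,n-k+3,\ldots,n}$ has codimension four; it should be $\sigma_{n-k,n-k+1,n-k+3,\ldots,n}$. Also, Kleiman with a general flag does not produce $OG$ Schubert varieties, so multiplicity one needs an argument. The paper gets it by push--pull against the Grassmannian-type duals $\sigma_{1,\ldots,k-1,k+2;}$ and $\sigma_{1,\ldots,k-2,k,k+1;}$, which is exactly where $k+2<n/2$ enters.

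The first real gap is irreducibility of $gY\cap OG(k,n)$. $\mathrm{Sym}^2S^*$ is not ample: on a line $\{A_{k-1}\subset W\subset B_{k+1}\}$ of $G(k,n)$ one has $S^*\cong\mathcal O^{k-1}\oplus\mathcal O(1)$, so $\mathrm{Sym}^2S^*$ has trivial summands and connectedness theorems for zero loci of ample bundles do not apply. Your inequality $\dim Y\ge\dim OG(k,n)+2$ also fails for $k=2$, since the difference is $\binom{k+1}{2}-2$. The paper instead uses Debarre's connectedness theorem \cite[Theorem 8.1]{D96}. Its hypothesis is the cohomological condition $[Y]\cdot\sigma_{k+1,k,\ldots,2}\neq 0$, which the paper checks with Coskun's Littlewood--Richardson rule. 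Your fallback constructions do not substitute for this:
\begin{itemize}
\item complete intersections of divisors only give multiples of $\sigma_{;k+1,k-2,\ldots,0}+\sigma_{;k,k-1,k-3,\ldots,0}$, the restriction of $\sigma_1^2$;
\item lines meeting a codimension-two subvariety of $Q$ form a divisor in $OG(2,n)$; multiples $a\sigma_{;3,0}$ need a codimension-three subvariety of degree $2a$;
\item nothing in the sketch adds $b\sigma_{;2,1}$ while preserving irreducibility.
\end{itemize}

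The second gap is necessity for $k=2$. Pushing forward cannot work: $i_*\sigma_{;2,1}=\sigma_{1,1}\cdot[OG(2,n)]=4\sigma_{1,1}\sigma_{2,1}=4\sigma_{3,2}$ in partition notation. But $4b\sigma_{3,2}$ is realizable in $G(2,n)$ for every $b\ge 1$, for example by the lines in a fixed $\P^{n-3}$ meeting an irreducible codimension-two subvariety of that $\P^{n-3}$ of degree $4b$. Pairing with $\sigma_{1,4;},\sigma_{2,3;}$ only reads off the coefficients. Your phrase ``swept by lines whose points span a linear space'' is exactly what has to be proved.

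The paper's argument runs as follows:
\begin{itemize}
\item Since $\sigma_{;2,1}\cdot\sigma_{1;1}=0$, the lines of $X$ miss a general point of $Q$, so the swept locus $Z$ has $\dim Z\le n-3$.
\item Then $2n-9=\dim X\le\dim F_1(Z)\le 2\dim Z-2$ forces $\dim Z=n-3$ and $\dim F_1(Z)=2\dim Z-3$ (as $Z$ is not linear).
\item Segre's classification then leaves two options. One is a one-parameter family of $\P^{n-4}$'s, which is impossible since $Q$ contains no $\P^{n-4}$ for $n\ge 9$. The other is a quadric hypersurface in a $\P^{n-2}$, whose Fano variety of lines has class $\sigma_{;2,1}$, whence $b=1$.
\end{itemize}
Also, $a=0$ is to be excluded only for $k=2$, not ``in general''.
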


\begin{proof}
Since $n>2k+4$, by \Cref{lem:grass-type}, 
the only codimension 2 Schubert varieties must be of quadric type. 
These are $\sigma_{;k+1,k-2,k-3,\ldots,1,0}$ and $\sigma_{;k,k-1,k-3,\ldots,1,0}$.

Let $\iota:OG(k,n)\to G(k,n)$ be the natural inclusion. 
The codimension 2 classes in $G(k,n)$ are $\sigma_{n-k-1,n-k+2,n-k+3,\ldots,n}$ 
and $\sigma_{n-k,n-k+1,n-k+3,\ldots,n}$ 
We will show that 
\[\iota^*\sigma_{n-k-1,n-k+2,n-k+3,\ldots,n}=\sigma_{;k+1,k-2,k-3,\ldots,1,0}\]
and 
\[\iota^*\sigma_{n-k,n-k+1,n-k+3,\ldots,n}=\sigma_{;k,k-1,k-3,\ldots,1,0}.\] 
Because $\iota^*$ respects grading by codimension,
 \[\iota^*\sigma_{n-k-1,n-k+2,n-k+3,\ldots,n}=\alpha\sigma_{;k+1,k-2,k-3,\ldots,1,0}+\beta\sigma_{;k,k-1,k-3,\ldots,1,0}\]
for some $\alpha,\beta\in \Z$. 

The dual of $\alpha\sigma_{;k+1,k-2,k-3,\ldots,1,0}$ is the Grassmannian type class $\sigma_{1,2,\ldots, k-1,k+2;}$. 
Then by the push-pull formula, we get that $\alpha=1$ and $\beta=0$. 
The computation for $\iota^*\sigma_{n-k,n-k+1,n-k+3,\ldots,n}$ is analogous.

Now consider the class $\nu=a\sigma_{n-k-1,n-k+2,n-k+3,\ldots,n}+b\sigma_{n-k,n-k+1,n-k+3,\ldots,n}$
in $H^4(G(k,n),\Z)$. 
By \cite{CR26}, if $k>2$ and $a,b\ge 0$, $\nu$ is realizable 
as an irreducible subvariety of $G(k,n)$.
Let $Y$ be a general irreducible representative. 
We know $[OG(k,n)]=2^k\sigma_{k,k-1,\ldots,1}\in H^*(G(k,n),\Z)$ 
(in the usual Grassmannian notation).
Then, using Coskun's Littlewood-Richardson rule \cite{C09},
one can check that $[Y]\cdot \sigma_{k+1,k,\ldots,2}\neq 0$.
Hence by \cite[Theorem 8.1]{D96}, $Y\cap OG(k,n)$ is irreducible.
We have 
\[[Y\cap OG(k,n)]=\iota^*(\nu)=a\sigma_{;k+1,k-2,k-3,\ldots,1,0}+b\sigma_{;k,k-1,k-3,\ldots,1,0}.\]
Hence any such class is realizable in $OG(k,n)$. 

If $k=2$, $n\ge 9$, $a>0, b\ge 0$ or $(a,b)=(0,1)$,
then $\nu$ is realizable in $G(2,n)$. 
By the same argument as above, 
$\iota^*\nu = a\sigma_{;3,0}+b\sigma_{;2,1}$ 
is realizable in $OG(2,n)$, $n\ge 9$. 

We claim that $b\sigma_{;2,1}$, is realizable in $OG(2,n)$, 
$n\ge 9$, if and only if $b=1$.
Suppose \[[X]=b\sigma_{;2,1}\in H^4(OG(2,n),\Z), \]
where $X$ is an irreducible subvariety. 
Then $\dim X=2n-9$. 
Let 
\[Z =\bigcup_{[\Lambda]\in X} \Lambda \subset \P^{n-1}\]
be the variety swept out by $X$.
Since $\sigma_{2,1}\cdot \sigma_{1;1}=0$, 
$Z$ does not contain a general point,
so $Z$ cannot sweep out the quadric $Q$. 
Hence $\dim Z\le n-3$.
On the other hand we have
\[2n-9 = \dim X \le \dim F_1(Z)\le 2\dim Z -2\]
so $\dim Z=n-3$.

To complete the proof we need
\begin{lemma}[{c.f. \cite{S48}}]
Suppose $Z\subset \P^N$ is a projective variety of dimension $d$.
If the Fano variety of lines on $Z$, $F_1(Z)$ has dimension $2d-3$ then either $Z$ is swept out 
by a 1-parameter family of linear $\P^{d-1}$s 
or $Z$ is a quadric hypersurface in $\P^{d+1}$.  
\end{lemma}

Here we have $2(n-3)-3=2n-9$. 
But $Q$ does not contain any linear $\P^{n-4}$ so $Z$ must be a quadric. 
We know $[F_1(Z)]=\sigma_{;2,1}$, so $b=1$.
\end{proof}

\newpage

\printbibliography

\end{document}